\title[]{An entropic interpolation proof of the HWI inequality}
\date{July 15, 2018}
 \keywords{Entropic interpolations, Schrödinger problem, relative entropy, Fisher information, Wasserstein distance.}
 \subjclass[2010]{60E15,60J60}
\author{Ivan Gentil}
\address{Ivan Gentil. Univ Lyon, Université Claude Bernard Lyon 1, CNRS UMR 5208, Institut Camille Jordan, F-69622 Villeurbanne, France}
\email{gentil@math.univ-lyon1.fr}
\author{Christian Léonard}
\address{Christian Léonard. Modal-X. Université Paris Nanterre. Nanterre, France}
\email{leonard@parisnanterre.fr}
\author{Luigia Ripani}
\address{Luigia Ripani. Univ Lyon, Université Claude Bernard Lyon 1, CNRS UMR 5208, Institut Camille Jordan, F-69622 Villeurbanne, France}
\email{ripani@math.univ-lyon1.fr}
\author{Luca Tamanini}
\address{Luca Tamanini. Institut f\"ur Angewandte Mathematik. Universit\"at Bonn. Bonn, Germany}
\email{tamanini@iam.uni-bonn.de}
\newtheorem{theorem}[equation]{Theorem}
\newtheorem{lemma}[equation]{Lemma}
\newtheorem{proposition}[equation]{Proposition}
\newtheorem{definition}[equation]{Definition}
\newtheorem{assumptions}[equation]{Assumptions}
\theoremstyle{remark}
\newtheorem{remark}[equation]{Remark}
\newtheorem{remarks}[equation]{Remarks}
\numberwithin{equation}{section}
\renewcommand{\H}{\mathbb{H}}
\newcommand{\N}{\mathbb{N}}
\newcommand{\R}{\mathbb{R}}
\newcommand{\sfd}{{\sf d}}
\newcommand{\Id}{{\rm Id}}                          
\newcommand{\Kliminf}{K\kern-3pt-\kern-2pt\mathop{\rm lim\,inf}\limits}  
\newcommand{\supp}{\mathop{\rm supp}\nolimits}   
\renewcommand{\d}{{\mathrm d}}
\newcommand{\dt}{{\d t}}
\newcommand{\ddt}{{\frac \d\dt}}
\newcommand{\restr}[1]{\lower3pt\hbox{$|_{#1}$}} 
\newcommand{\eps}{\varepsilon}  
\newcommand{\nchi}{\chi}
\newcommand{\prob}[1]{\mathcal P(#1)}                   
\newcommand{\probt}[1]{\mathcal P_2(#1)}                   
\newcommand{\mm}{\mathfrak m}                                
\newcommand{\X}{{\rm X}}
\newcommand{\vol}{{\rm Vol}}
\newcommand{\RCD}{{\sf RCD}}
\newcommand{\HS}{{\lower.3ex\hbox{\scriptsize{\sf HS}}}}
\renewcommand{\H}[1]{{\rm Hess}(#1)}
\newcommand{\Ric}{{\rm Ric}}
\newcommand{\hr}{{\sf r}}
\newcommand{\hE}{{\sf E}}
\newcommand{\hP}{{\sf P}}
\newcommand{\hQ}{{\sf Q}}
\newcommand{\hR}{{\sf R}}
\newcommand{\hL}{\mathcal{L}}
\newcommand{\hT}{\mathcal{T}}
\newcommand{\vf}{\overrightarrow{v}}
\newcommand{\vb}{\overleftarrow{v}}
\newcommand{\vc}[1]{v ^{ \mathrm{cu},#1}}
\newcommand{\vo}[1]{v ^{ \mathrm{os},#1}}
\begin{document}

\begin{abstract} 
We present a pathwise proof of the HWI inequality which is based on entropic interpolations rather than displacement ones. Unlike the latter, entropic interpolations are regular both in space and time. Consequently, our approach is closer to the Otto-Villani heuristics, presented in the first part of the article \cite{OttoVillani00}, than the original rigorous proof presented in the second part of \cite{OttoVillani00}.
\end{abstract}

\maketitle

\tableofcontents


\section*{Introduction}

In a seminal paper \cite{OttoVillani00}, Otto and Villani obtained a powerful functional inequality relating the relative entropy $H(\cdot\,|\,\mm)$ with respect to some reference measure $\mm \in \probt{\X}$, the quadratic transport cost $W_2^2(\cdot,\mm)$ and the Fisher information $I(\cdot\,|\,\mm)$. This so-called HWI inequality roughly states that: $H\le W \sqrt{I}- \kappa W^2/2,$ where the real parameter $\kappa$ is a curvature lower bound associated to $\mm$, see \eqref{eq-cd(k,inf)}, \eqref{eq-cd(k,n)} and Theorem \ref{res-h05} below for the exact statement and its well-known consequences in terms of Talagrand and logarithmic Sobolev inequalities. 

The first part of Otto and Villani's article is dedicated to a heuristic proof based on Otto calculus, see \cite{Otto01,Villani09}, where one formally equips the set of probability measures with the Riemannian-like distance $W_2$ and where McCann displacement interpolations are interpreted as geodesics. Since these interpolations suffer from a lack of regularity, the first and second order time derivatives along them are only formal. Consequently, although heuristics led to the right conjecture, the authors presented an alternative rigorous proof based on a significantly different approach.

In the present article, a new proof of the HWI inequality is proposed. The main idea is to replace the `irregular' McCann interpolation $(\mu_t)$ between two probability measures $\mu_0$ and $\mu_1$ by a family of `smooth' curves of measures $(\mu_t^\eps)$, called `entropic interpolations' (see Definition \ref{def-h01} below), where $\eps > 0$ is a small fluctuation parameter such that $\mu_t^\eps$ converges to $\mu_t$ narrowly as $\eps \downarrow 0$. Otto and Villani's heuristics apply rigorously to $(\mu_t^\eps)$, so that it remains to let $\eps$ tend down to zero to obtain the desired result.

The paper is structured as follows. Section \ref{sec-HWI-statement} is dedicated to the statement of the HWI inequality. Basic material about entropic interpolations which is needed for the proof is gathered at Section \ref{sec-entint}. Finally the proof of the inequality is done at Section \ref{sec-HWI-proof}; its core is Lemma \ref{res-h09} which is the analogue of Otto and Villani's heuristic approach. In Section \ref{sec-remarks} some comments about possible extensions and simplifications of our approach are collected.

\section{Statement of  the HWI inequality}
\label{sec-HWI-statement}

Before stating the HWI inequality at Proposition \ref{res-h04} and Theorem \ref{res-h05} below, we need to make clear the framework we shall work within and introduce the quantities $H$, $W$ and $I$.

\subsection*{Setting \ref{setting}} \label{setting}

Let $(\X,\sfd,\mm)$ be:
\begin{enumerate}[(a)]
\item either $(\R^n,|\cdot|,\mm)$, where $|\cdot|$ is the Euclidean distance and the reference measure $\mm$ is defined as
\begin{align}\label{eq-01a}
\mm := e^{-V}\mathcal{L}^n
\end{align}
with $\mathcal{L}^n$ the $n$-dimensional Lebesgue measure and $V : \R^n \to [0,\infty)$ satisfying the following hypotheses: it belongs to $C^\infty(\R^n)$, is such that $\mm$ is a probability measure and
\begin{equation}\label{eq-cd(k,inf)}
\H V \geq \kappa\Id
\end{equation}
for some $\kappa \in \R$;
\item or $(M,\sfd_g,\mm)$, where $M$ is a smooth Riemannian manifold without boundary and with metric tensor $g$, $\sfd_g$ is the induced distance and $\mm$ is given by
\begin{align}\label{eq-01b}
\mm := e^{-V}\vol
\end{align}
with $\vol$ the volume measure on $M$ and $V : M \to [0,\infty)$ satisfying the following hypotheses: it belongs to $C^\infty(\R^n)$, is such that $\mm$ is a probability measure and, for some $\kappa \in \R$ and $N \geq n = \dim(M)$, the Bakry-\'Emery Ricci tensor $\Ric_{V,N}$ satisfies the lower bound
\begin{equation}\label{eq-cd(k,n)}
\Ric_{V,N} := \Ric_g - (N-n)\frac{\H{e^{-\frac{1}{N-n}V}}}{e^{-\frac{1}{N-n}V}} \geq \kappa g.
\end{equation}
\end{enumerate}

\subsection*{Relative entropy}

For any two probability measures $p$ and $r$ on a measurable space $Z$ the relative entropy of $p$ with respect to $r$ is defined by
\[
H(p\,|\,r) := \int_Z \log\Big(\frac{\d p}{\d r}\Big) \d p \in [0,\infty],
\]
where it is understood that this quantity is infinite when $p$ is not absolutely continuous with respect to $r$. In our case, $Z$ will be $\X$, $\X \times \X$ or $C([0,1],\X)$.

\subsection*{Quadratic transport cost}

By $\prob\X$ we shall denote the space of Borel probability measures on $\X$ and by $\probt\X$ the subclass of those with finite second moment, namely all $\mu \in \prob\X$ such that $\int_\X \sfd^2(\cdot,x)\d\mu < \infty$ for some (and thus all) $x \in \X$. With this said, the squared Wasserstein distance between $\mu,\nu \in \probt\X$ is defined as
\[
W_2^2(\mu,\nu) := \inf_{\pi} \int_{\X \times \X} \sfd^2(x,y)\, \pi(\d x\d y)
\]
where the infimum runs through all the couplings $\pi \in \prob{\X \times \X}$ of $\mu$ and $\nu$, that is $\pi(\d x \times \X) = \mu(\d x)$ and $\pi(\X \times \d y) = \nu(\d y)$.

\subsection*{Fisher information}
The Fisher information of $\mu \in \prob\X$ with respect to $\mm$ is defined by
\[
I(\mu\,|\,\mm) := 4\int_\X |\nabla\sqrt{\rho}|^2\,\d\mm = \int_{\{\rho > 0\}}\frac{|\nabla\rho|^2}{\rho}\,\d\mm \qquad \text{ if } \mu = \rho\mm,\,\sqrt{\rho} \in W^{1,2}(\X)
\]
and $+\infty$ otherwise. Up to identify $\mu$ with its density, the Fisher information is lower semicontinuous with respect to the weak topology of $L^1(\mm)$ (see for instance \cite{AmbrosioGigliSavare11}).

\medskip

With this premise, the statement of the HWI$^*$ inequality is

\begin{proposition}[HWI$^*$ inequality] \label{res-h04}
Let $(\X,\sfd,\mm)$ be as in  Setting  \ref{setting}. Then for any $\mu_0,\mu_1 \in \probt\X$ such that $H(\mu_0\,|\,\mm) < \infty$,
\[
H(\mu_1\,|\,\mm) - H(\mu_0\,|\,\mm) \leq W_2(\mu_0,\mu_1)\sqrt{I(\mu_1\,|\,\mm)} - \frac{\kappa}{2} W_2^2(\mu_0,\mu_1).
\]
\end{proposition}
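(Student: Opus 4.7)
The plan is to transpose the Otto--Villani heuristic of \cite{OttoVillani00} rigorously from the McCann displacement interpolation to the entropic interpolation $(\mu_t^\eps)_{t \in [0,1]}$ between $\mu_0$ and $\mu_1$ (Definition \ref{def-h01}), exploiting the fact that the latter is smooth in both space and time, so that $t \mapsto H(\mu_t^\eps \,|\, \mm)$ is of class $C^2$ on $(0,1)$. The HWI$^*$ inequality will then be recovered by letting $\eps \downarrow 0$, at which limit $\mu_t^\eps$ converges narrowly to the McCann geodesic joining $\mu_0$ to $\mu_1$.

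At the core of the proof (Lemma \ref{res-h09}) I would establish two differential statements along $(\mu_t^\eps)$. Writing $v_t^\eps$ for the velocity field driving the continuity equation satisfied by $(\mu_t^\eps)$, the first one is a chain rule followed by Cauchy--Schwarz,
\[
\frac{d}{dt} H(\mu_t^\eps \,|\, \mm) \le \|v_t^\eps\|_{L^2(\mu_t^\eps)} \sqrt{I(\mu_t^\eps \,|\, \mm)}.
\]
The second is a Bochner-type identity for the second time-derivative of $H(\mu_t^\eps\,|\,\mm)$, whose sign is controlled through the curvature hypothesis \eqref{eq-cd(k,inf)} or \eqref{eq-cd(k,n)}; this plays the role of geodesic $\kappa$-convexity of the entropy and would produce the chord--tangent bound
\[
H(\mu_1 \,|\, \mm) - H(\mu_0 \,|\, \mm) \le \left.\frac{d}{dt}\right|_{t=1^-} H(\mu_t^\eps \,|\, \mm) - \frac{\kappa}{2}\, \mathcal A_\eps,
\]
where $\mathcal A_\eps := \int_0^1 \|v_t^\eps\|^2_{L^2(\mu_t^\eps)}\, dt$ is the kinetic action of the entropic interpolation and where we have used the endpoint identifications $\mu_0^\eps = \mu_0$, $\mu_1^\eps = \mu_1$. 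Combining this with the first estimate evaluated at $t=1$ yields an $\eps$-regularised HWI$^*$ inequality, with $W_2^2(\mu_0,\mu_1)$ replaced by a quantity derived from $\mathcal A_\eps$ and $W_2(\mu_0,\mu_1)$ by $\|v_1^\eps\|_{L^2(\mu_1)}$.

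To conclude, I would let $\eps \downarrow 0$ and invoke the small-noise asymptotics for the Schr\"odinger problem: the kinetic action satisfies $\mathcal A_\eps \to W_2^2(\mu_0,\mu_1)$ and the endpoint velocity norm satisfies $\|v_1^\eps\|_{L^2(\mu_1)} \to W_2(\mu_0,\mu_1)$, while the entropy and Fisher information terms are independent of $\eps$ because they are evaluated at the fixed endpoints.

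I expect the main obstacle to be the rigorous Bochner step along the entropic interpolation: one must carry out the second-derivative computation of $H(\mu_t^\eps\,|\,\mm)$ along a \emph{diffusive}, non-geodesic curve, and properly manage the additional Fisher-information-type contributions (which have no analogue along $W_2$-geodesics) so that a clean $\kappa$-convex bound survives the vanishing-noise limit. The small-noise convergence of the kinetic action and of the endpoint velocity is also delicate, but is by now fairly well documented in the Schr\"odinger-bridge literature and should pose a more routine problem.
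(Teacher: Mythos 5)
Your proposal follows the paper's strategy essentially step for step: approximate the marginals, derive a chord--tangent inequality for $t\mapsto H(\mu_t^\eps\,|\,\mm)$ via a $\Gamma_2$-based second-derivative bound (this is the content of Lemma~\ref{res-h09}, after a $\delta$-regularisation to handle the vanishing of the densities), apply Cauchy--Schwarz at the right endpoint, and send $\eps\downarrow 0$ using the small-noise asymptotics of the Schr\"odinger problem. The one step you describe as ``fairly well documented'' that in fact requires a specific tool is the endpoint-velocity convergence $\|v_1^\eps\|_{L^2(\mu_1)}\to W_2(\mu_0,\mu_1)$: time-averaged convergence of the kinetic action $\mathcal A_\eps\to W_2^2(\mu_0,\mu_1)$ is indeed standard, but promoting it to the single time $t=1$ is exactly the role of the conservation law of Lemma~\ref{res-h07}, namely that $t\mapsto\int_\X|\nabla\vartheta_t^\eps|^2\,\d\mu_t^\eps-\tfrac{\eps^2}{4}I(\mu_t^\eps\,|\,\mm)$ is constant in $t$, so that evaluating at $t=1$ and using $I(\mu_1\,|\,\mm)<\infty$ closes the argument.
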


In this case it is said that the reference measure $\mm$ satisfies the HWI inequality. 

As already shown by Otto and Villani in \cite{OttoVillani00}, different choices of $\mu_0$ and $\mu_1$ in Proposition \ref{res-h04} entail three important consequences collected here below.

\begin{theorem}\label{res-h05}
Let $(\X,\sfd,\mm)$ be as in  Setting  \ref{setting} with the further assumption that $\mm \in \probt\X$. Then the following inequalities are satisfied:
\begin{enumerate}[(a)]
\item
HWI inequality: 
\begin{equation*}
H(\nu\,|\,\mm) \leq W_2(\nu,\mm) \sqrt{I(\nu\,|\,\mm)}- \frac{\kappa}{2}W_2^2(\nu,\mm), \qquad \forall \nu \in \probt\X;
\end{equation*}
\item
Talagrand inequality:
\begin{equation*}
\frac{\kappa}{2}W_2^2(\nu,\mm) \leq H(\nu\,|\,\mm), \qquad \forall \nu \in \probt\X;
\end{equation*}
\item
Logarithmic Sobolev inequality: assume that $\kappa>0$, then
\begin{equation*}
H(\nu\,|\,\mm) \leq \frac{1}{2\kappa}I(\nu\,|\,\mm), \qquad \forall \nu \in \prob\X.
\end{equation*}
\end{enumerate}
\end{theorem}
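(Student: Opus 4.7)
The plan is to deduce all three inequalities as immediate corollaries of the HWI$^*$ inequality of Proposition \ref{res-h04}, by specific choices of the two endpoints $\mu_0,\mu_1$, following \cite{OttoVillani00}. The real substance lies entirely in Proposition \ref{res-h04}, which is established elsewhere in the paper via entropic interpolations; here only a few lines of elementary manipulation are needed.

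For (a), I would set $\mu_0 = \mm$ and $\mu_1 = \nu$. Since $H(\mm\,|\,\mm) = 0 < \infty$, the hypothesis of Proposition \ref{res-h04} is satisfied, and its left-hand side collapses to $H(\nu\,|\,\mm)$, yielding HWI. The standing hypothesis $\mm \in \probt\X$ ensures that $W_2(\nu,\mm)$ is finite for $\nu \in \probt\X$.

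For (b), I would reverse the endpoints and take $\mu_0 = \nu$, $\mu_1 = \mm$. If $H(\nu\,|\,\mm) = +\infty$ the inequality is trivial; otherwise Proposition \ref{res-h04} applies, and since the density of $\mm$ with respect to itself is identically $1$, one has $I(\mm\,|\,\mm) = 0$, so HWI$^*$ reduces to $-H(\nu\,|\,\mm) \leq -\frac{\kappa}{2}W_2^2(\nu,\mm)$, i.e.\ Talagrand after sign change.

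For (c), with $\kappa > 0$, I would combine (a) with the elementary optimization $\sup_{w \geq 0}\bigl(wa - \tfrac{\kappa}{2}w^2\bigr) = \tfrac{a^2}{2\kappa}$, attained at $w = a/\kappa$: applied with $a = \sqrt{I(\nu\,|\,\mm)}$ and $w = W_2(\nu,\mm)$, inequality (a) immediately yields the logarithmic Sobolev inequality when $\nu \in \probt\X$. The extension to general $\nu \in \prob\X$ is then routine: if $I(\nu\,|\,\mm) = +\infty$ the bound is vacuous, and otherwise a standard regularization (e.g.\ along the heat semigroup of $\mm$) together with lower semicontinuity of $H$ and $I$ reduces to the case already treated. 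The only real obstacle is this bookkeeping on the configurations in which some of the quantities may be infinite or $\nu$ lies outside $\probt\X$, but in each such configuration the statement either holds trivially or reduces by approximation to the regime in which Proposition \ref{res-h04} directly applies.
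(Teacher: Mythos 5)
Your proof matches the paper's essentially verbatim: (a) and (b) are obtained by plugging $(\mu_0,\mu_1) = (\mm,\nu)$ and $(\nu,\mm)$ into Proposition~\ref{res-h04}, and (c) follows by bounding the right-hand side of (a) by its supremum in $W_2$, then extending to all of $\prob\X$ by approximation. One small imprecision in (c): the limiting step cannot rest on ``lower semicontinuity of $I$'' alone --- that inequality goes the wrong way; what you actually need is that the Fisher informations of the approximating measures converge to (or eventually stay below) $I(\nu\,|\,\mm)$, which is precisely what the paper's Lemma~\ref{res-h01}(b) supplies for the heat-semigroup-plus-cutoff regularization, combined with lower semicontinuity of $H$.
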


\begin{proof}
First of all, since $\mm \in \probt\X$, it follows that $W_2(\nu,\mm)$ is finite for all $\nu \in \probt\X$.
\begin{enumerate}[(a)]
\item
The HWI inequality is obtained by choosing $\mu_0 = \mm,\mu_1 = \nu$.
\item
The Talagrand inequality is obtained by choosing $\mu_0 = \nu,\mu_1 = \mm$.
\item
When $\kappa>0$, the logarithmic Sobolev inequality with $\nu \in \probt\X$ follows by taking the supremum with respect to $W_2$ in the right-hand side of the HWI inequality (a). To extend this result to the case where $\nu \in \prob\X$, a standard approximation argument (carried out for instance in Lemma \ref{res-h01} below) is sufficient.
\end{enumerate}
\end{proof}

\begin{remarks}\ \begin{enumerate}[(a)] \label{rem-01}
\item
When $\kappa>0$, any $\nu \in \prob\X$ such that $H(\nu\,|\,\mm) < \infty$ stands in $\probt\X$. This follows from the variational representation of the relative entropy as
\[
H(\nu\,|\,\mm) = \sup_f\bigg\{\int_\X f\,\d\nu - \log\int_\X e^f\,\d\mm \,:\, \int_\X e^f\,\d\mm < \infty,\, \int_\X f^-\,\d\nu < \infty \bigg\},
\]
where $f^- := \max\{-f,0\}$ (see for instance \cite{Leonard14b} for a proof). If we choose $f = \alpha\sfd^2(\cdot,x)$ for some $x \in \X$ and $ 0 < \alpha < \kappa$, then $\int_\X e^f\,\d\mm < \infty$ holds: in Setting  \ref{setting}-(a) this is due to \eqref{eq-cd(k,inf)} whereas in Setting  \ref{setting}-(b) to \eqref{eq-cd(k,n)} and the Bishop-Gromov inequality. Therefore
\[
\int_\X \sfd^2(\cdot,x)\,\d\nu \leq \alpha^{-1}\Big( H(\nu\,|\,\mm) + \log\int_M e^{\alpha \sfd^2(\cdot,x)}\,\d\mm\Big),
\]
whence the claim. In particular, $\mm \in \probt\X$.
\item
Talagrand inequality (b) is irrelevant when $\kappa \leq 0$. When $\kappa>0,$ in view of previous remark it extends to all $\nu \in \prob\X$ provided that one sets $W_2(\nu,\mm)= \infty$ when $\nu$ does not belong to $\probt\X$. 
\item
It follows from the logarithmic Sobolev inequality that when \eqref{eq-cd(k,inf)} or \eqref{eq-cd(k,n)} holds with some $\kappa>0$, any $\nu \in \prob\X$ such that $H(\nu\,|\,\mm) = \infty$ satisfies $I(\nu\,|\,\mm) = \infty$. Similarly, it follows from the HWI inequality that when \eqref{eq-cd(k,inf)} or \eqref{eq-cd(k,n)} is only supposed to hold with $\kappa$ real, as soon as $\nu \in \probt\X$, then $H(\nu\,|\,\mm) = \infty$ implies that $I(\nu\,|\,\mm) = \infty$.
\end{enumerate}
\end{remarks}

\section{Entropic interpolations}
\label{sec-entint}

In this section we propose a short and self-contained presentation of entropic interpolations. The purpose is twofold: to provide the reader with those notions and results that will be frequently used later on and discuss their physical interpretation via Nelson's dynamical view of diffusion processes. For sake of simplicity, the latter will be carried out in the more familiar Euclidean setting.

\subsection*{Notations}

Let $X = (X_t)_{0 \leq t \leq 1}$, $X_t : \Omega \to \X$ with $\Omega := C([0,1],\X)$ be the canonical process, defined by
\[
X_t(\omega) := \omega_t, \qquad \forall\, \omega \in \Omega,\forall 0\, \leq t \leq 1.
\]
For any path measure $\hQ \in \mathcal{P}(\Omega)$ and each $0 \leq t \leq 1$, we denote by $\hQ_t := (X_t)_{\#}\hQ \in \prob\X$ the $t$-th marginal of $\hQ$, that is the law of the position $X_t$ at time $t$ of the random path $X$ under $\hQ$. Moreover, for any $0 \leq s,t \leq 1$, we shall denote by $\hQ_{st}$ the joint law of $X_s$ and $X_t$ under $\hQ$, namely $\hQ_{st} := (X_s,X_t)_{\#}\hQ$.

As reference path measure $\hR \in \prob\Omega$ we consider the law of the Markov diffusion process with generator
\[
\hL := \frac{1}{2}(\Delta -\nabla V \cdot\nabla)
\]
with initial law $\hR_0 = \mm$, where the potential $V$ appears at \eqref{eq-01a}, \eqref{eq-01b}, $\Delta$ is the Laplace-Beltrami operator on $\X$ and $\nabla$ the Levi-Civita connection associated to the metric $g$ (in Setting  \ref{setting}-(a) they are nothing but the standard Laplacian and gradient). It is well-known that $\hR$ is a reversible Markov measure with reversing measure $\mm$. In particular it is stationary, that is $\hR_t = \mm$ for all $0 \leq t \leq 1$. For any $\eps>0$ we denote by $X^\eps$ the time-rescaled process defined by $X^\eps_t := X_{\eps t}$, $0 \leq t \leq 1$, and by $\hR^\eps := (X^\eps)_{\#}\hR$ the corresponding path measure. The parameter $\eps$ is meant to tend to zero so that $\hR^\eps$ is a slowed down version of $\hR$, whose generator is
\[
\hL^\eps = \eps \hL = \frac{\eps}{2}(\Delta -\nabla V \cdot\nabla).
\]
For any $\eps>0$, as a time rescaling of $\hR$, $\hR^\eps$ is also $\mm$-reversible, so that in particular $\hR^\eps_t = \mm$ for all $t$.

The 1-parameter semigroup associated to $\hL$ will be denoted by $(\hT_t)$ and, in a completely analogous way, $(\hT^\eps_t)$ the one associated to $\hL^\eps$; notice that $\hT^\eps_t = \hT_{\eps t}$ for all $t \geq 0$. Within  Setting  \ref{setting} it is well-known (see for instance \cite{Grigoryan09}) that there exists a unique heat kernel $\hr_t(x,y)$ associated to $\hL$ which is a smooth function on $(0,\infty) \times \X \times \X$. Therefore, the semigroup $(\hT_t)$ can be represented by
\begin{equation}\label{eq:semigroup}
\hT_t f(x) = \hE_\hR[f(X_0) \,|\, X_t = x] = \int_\X f(x)\hr_t(x,y)\,\mm(\d y)
\end{equation}
for all $f \in L^\infty(\mm)$. Let us also recall that, in conjunction with \eqref{eq-cd(k,inf)} or \eqref{eq-cd(k,n)}, $(\hT_t)$ enjoys the Bakry-\'Emery contraction estimate
\begin{equation}\label{eq:bakry}
|\nabla\hT_t f|^2 \leq e^{-2\kappa t}\hT_t(|\nabla f|^2) \qquad \forall f \in C_c^\infty(\X),\, t \geq 0.
\end{equation}
For its proof as well as for all the regularizing properties of $\hT$ that will be used throughout the paper, we address the reader to \cite{BakryGentilLedoux14}.

Finally, recall that the operators
\[
\Gamma(f,g) := \hL(fg) - f\hL g - g\hL f \qquad \Gamma_2(f,g) := \hL\Gamma(f,g) - \Gamma(f,\hL g) - \Gamma(g,\hL f),
\]
defined for all $f,g \in C^\infty_c(\X)$, are naturally associated  to $\hL$. As it is not difficult to see, the drift $V$ in $\mm$ does not affect $\Gamma$, since $\Gamma(f,g) = \langle\nabla f,\nabla g\rangle$. It is worth mentioning that, with respect to the standard definition provided in \cite{BakryGentilLedoux14}, here $\Gamma$ and $\Gamma_2$ are not divided by 2, as the factor 1/2 already appears in $\hL$, which thus corresponds to an SDE driven by a standard Brownian motion.

\subsection*{The Schrödinger problem}

Let $\mu_0,\mu_1 \in \prob\X$ be two probability measures: the Schr\"odinger problem associated with $\hR^\eps,\mu_0,\mu_1$ is defined by
\begin{equation}\label{eq-h06}
\eps H(\hP\,|\,\hR^\eps)\to\min; \qquad \hP \in \mathcal{P}(\Omega)\,:\, \hP_0 = \mu_0, \hP_1= \mu_1
\tag{S$_{\eps}$}
\end{equation}
and its value is called `entropic cost'. As a strictly convex minimization problem, it admits at most one solution. 

\begin{definition}[$\hR^\eps$-entropic interpolation] \label{def-h01}
The solution $\hP^\eps$ of \eqref{eq-h06}, if it exists, is called the $\hR^\eps$-entropic bridge between $\mu_0$ and $\mu_1$. The $\hR^\eps$-entropic interpolation $(\mu^\eps_t)$ between $\mu_0$ and $\mu_1$ is defined as the time marginal flow of the solution $\hP^\eps$, namely
\[
\mu^\eps_t := \hP^\eps_t, \qquad 0 \leq t \leq 1.
\]
\end{definition}

The name `entropic interpolation' stems from the connection with displacement interpolation. Indeed, it is known from \cite{Mikami04} that
\begin{equation}\label{eq:converge}
\lim_{\eps \downarrow 0} \,\,\inf \eqref{eq-h06} = \frac{1}{2} W_2^2(\mu_0,\mu_1).
\end{equation}
This limit is a  consequence of a more general result asserting that
 \eqref{eq-h06} converges to the quadratic Monge-Kantorovich problem as $\eps \downarrow 0$ in the sense of $\Gamma$-convergence, see  \cite{Leonard12}.
\\
As shown in \cite{Leonard14}, if \eqref{eq-h06} admits the solution $\hP^\eps$, then there exist two non-negative measurable functions $f^\eps,g^\eps : \X \to [0,\infty)$ such that $\hP^\eps = f^\eps(X_0)g^\eps(X_1)\,\hR^\eps$ and defining  
\begin{align}\label{eq-h23b}
f^\eps_t := \mathcal{T}^\eps_t f^\eps, \quad g^\eps_t := \mathcal{T}^\eps_{1-t}g^\eps, \qquad 0 \leq t \leq 1
\end{align}
we obtain $\hP^\eps_t(\d x) = \mu_t^\eps(\d x) = f^\eps_t(x)g^\eps_t(x)\,\mm(\d x)$. By computing the endpoint marginals of $\hP^\eps$ we obtain the following two conditions
\begin{equation}\label{eq-h22}
\rho_0 := \frac{\d\mu_0}{\d\mm} = f^\eps\, g^\eps_0 \qquad\qquad \rho_1 := \frac{\d\mu_1}{\d\mm} = f^\eps_1\, g^\eps
\end{equation}
usually known as `Schr\"odinger system': indeed, if we interprete them as a nonlinear system where the unknowns are $f^\eps$ and $g^\eps$, then the (unique up to an obvious multiplicative rescaling) solution completely determines $\hP^\eps$ (see for instance \cite{Leonard14}). As far as the convergence of entropic interpolations towards displacement ones is investigated, the following functions
\[
\varphi^\eps_t = \eps\log f^\eps_t \qquad\qquad \psi^\eps_t := \eps\log g^\eps_t
\]
are of special interest. We also set $\varphi^\eps := \eps\log f^\eps$ in $\supp(\mu_0)$ and $\psi^\eps := \eps\log g^\eps$ in $\supp(\mu_1)$. They are called Schrödinger potentials, in connection with Kantorovich ones.

\subsection*{Existence and regularity results}

Let us now derive a criterion, in terms of the endpoint marginals $\mu_0$ and $\mu_1$, for the existence of regular functions $f^\eps,g^\eps$ with well-defined Fisher information solving the Schrödinger system \eqref{eq-h22}. As noticed in \cite{GigTam17}, \cite{GigTam18} the regularity (smoothness and integrability) of $\mu_0$ (resp.\ $\mu_1$) is inherited by $f^\eps$ (resp.\ $g^\eps$). In the next result we extend this property: if $I(\mu_0\,|\,\mm)$ is finite, then so is $I(f^\eps\mm\,|\,\mm)$ and analogously with $\mu_1,g^\eps$.

\begin{proposition}\label{res-h02}
Let $(\X,\sfd,\mm)$ be as in  Setting  \ref{setting}, $\eps > 0$ and consider two probability measures $\mu_0 = \rho_0\mm,\mu_1 = \rho_1\mm \in \prob\X$ with compact supports. Then the following hold: 
\begin{enumerate}[(a)]
\item
The Schrödinger problem \eqref{eq-h06} admits a solution with finite entropy if and only if $H(\mu_0\,|\,\mm)$, $H(\mu_1\,|\,\mm)< \infty$. This solution is unique and the $\hR^\eps$-entropic interpolation between $\mu_0$ and $\mu_1$ exists.

\item
Suppose in addition that $\rho_0,\rho_1 \in L^\infty(\mm)$.
\begin{enumerate}[(i)]
\item
Then $f^\eps,g^\eps \in L^\infty(\mm)$.

\item
For any $0<t<1$ the functions $f^\eps_t,g^\eps_t,\rho^\eps_t$ as well as $f^\eps_1$ and $g^\eps_0$ belong to $C^\infty(\X) \cap L^\infty(\mm)$.

\item
For any $k \in \N \cup \{\infty\}$, if $\rho_0 \in C^k(\X)$ (resp.\ $\rho_1$), then the function $f^\eps$ (resp.\ $g^\eps$) also belongs to $C^k(\X)$.
\end{enumerate}

\item
In addition to item (b), suppose that $I(\mu_0\,|\,\mm)$ is finite (resp.\ $I(\mu_1\,|\,\mm)$). Then so is $I(f^\eps\mm\,|\,\mm)$ (resp.\ $I(g^\eps\mm\,|\,\mm)$).
\end{enumerate}
\end{proposition}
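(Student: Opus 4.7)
\medskip
\noindent\textbf{Proof plan.}

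For (a), both directions rest on the Csiszár additivity formula $H(\hP\,|\,\hR^\eps)=H(\hP_{01}\,|\,\hR^\eps_{01})+\int H(\hP^{x,y}\,|\,\hR^{\eps,x,y})\,\hP_{01}(\d x\d y)$ applied to disintegrations with respect to $(X_0,X_1)$. Necessity is the easy half: admissibility forces $\hP_0=\mu_0$, $\hP_1=\mu_1$, and the contraction/data-processing inequality yields $H(\hP\,|\,\hR^\eps)\ge H(\mu_i\,|\,\mm)$ for $i=0,1$. For sufficiency, since $\supp(\mu_0),\supp(\mu_1)$ are compact, the heat kernel $\hr^\eps_1$ is bounded above and below by positive constants on their product, so the measure $\hat\pi(\d x\d y):=\rho_0(x)\rho_1(y)\,\mm(\d x)\mm(\d y)$ has finite $H(\hat\pi\,|\,\hR^\eps_{01})$ (bounded in terms of $H(\mu_0\,|\,\mm)$, $H(\mu_1\,|\,\mm)$ and $\log\hr^\eps_1$). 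Gluing back via the $\hR^\eps$-bridges produces an admissible $\hP$ with finite entropy, and existence/uniqueness of the minimizer then follows from lower semicontinuity and strict convexity of $H(\cdot\,|\,\hR^\eps)$, as in \cite{Leonard14}. From that reference one also recovers the factorization $\hP^\eps=f^\eps(X_0)g^\eps(X_1)\,\hR^\eps$ and the Schrödinger system \eqref{eq-h22}.

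For (b), the key observation is that the Schrödinger system can be inverted on the supports: $f^\eps=\rho_0/g^\eps_0$ on $\supp(\mu_0)$ and $g^\eps=\rho_1/f^\eps_1$ on $\supp(\mu_1)$. Since $g^\eps_0=\hT^\eps_1g^\eps$ and the heat kernel $\hr^\eps_1(x,y)$ is continuous and strictly positive, on any compact set $K\supseteq\supp(\mu_0)$ one has $g^\eps_0\ge c\int g^\eps\,\d\mm$ for some $c=c(K,\eps)>0$; combined with the upper bound on $\rho_0$ this gives $f^\eps\in L^\infty(\mm)$ (and symmetrically for $g^\eps$), proving (i). Item (ii) is then a direct application of the parabolic smoothing of the semigroup $\hT^\eps_t$: applied to a bounded function it produces a $C^\infty$ function for every $t>0$, and the product $f^\eps_tg^\eps_t$ is smooth for $0<t<1$. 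For (iii), write $f^\eps=\rho_0/g^\eps_0$ with the denominator smooth and bounded away from $0$ on the compact support, so the $C^k$ regularity of $\rho_0$ transfers directly to $f^\eps$.

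For (c), I would compute with $f^\eps=\rho_0/g^\eps_0$ on $\{\rho_0>0\}$. Using the quotient rule and the elementary inequality $|a-b|^2\le 2|a|^2+2|b|^2$,
\begin{equation*}
\frac{|\nabla f^\eps|^2}{f^\eps}
\;=\;\frac{|g^\eps_0\nabla\rho_0-\rho_0\nabla g^\eps_0|^2}{\rho_0(g^\eps_0)^3}
\;\le\;\frac{2}{g^\eps_0}\frac{|\nabla\rho_0|^2}{\rho_0}+\frac{2\rho_0}{g^\eps_0}\bigl|\nabla\log g^\eps_0\bigr|^2.
\end{equation*}
Integrating against $\mm$ over $\{\rho_0>0\}\subset\supp(\mu_0)$, the first term is bounded by $2\|1/g^\eps_0\|_{L^\infty(\supp\mu_0)}\,I(\mu_0\,|\,\mm)$, which is finite by assumption and by part (b); the second is controlled by $2\|\nabla\log g^\eps_0\|_{L^\infty(\supp\mu_0)}^2\,\|\rho_0/g^\eps_0\|_{L^\infty}$, which is finite by the smoothness and positivity of $g^\eps_0$ on the compact set. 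The argument for $I(g^\eps\mm\,|\,\mm)$ is symmetric.

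\medskip
\noindent\textbf{Main obstacle.} The delicate step is (a): producing an explicit admissible path measure with finite entropy. The compactness of the supports is what makes the heat-kernel bounds uniform and the tensor-product construction work; without it one would need to invoke the general Gaussian-type bounds on $\hr^\eps_1$ and a truncation argument. Once existence and the factorization of $\hP^\eps$ are in hand, parts (b) and (c) reduce to standard parabolic regularity and an explicit chain rule.
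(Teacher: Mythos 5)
Your proposal is correct, and parts (b-ii) and (b-iii) follow the paper's own argument essentially verbatim (parabolic smoothing, then inversion of the Schrödinger system as $f^\eps=\rho_0/g^\eps_0$). The two genuine differences are in (a)/(b-i) and in (c). For (a) and (b-i) the paper does not give an argument at all: it simply cites \cite[Proposition 2.1]{GigTam18}. Your Csisz\'ar-additivity construction — tensorizing $\mu_0\otimes\mu_1$, using the two-sided bounds on $\hr^\eps_1$ over the compact product $\supp(\mu_0)\times\supp(\mu_1)$ to show the plan has finite entropy, then gluing in the $\hR^\eps$-bridges — is a self-contained proof of the existence/uniqueness statement that the reference supplies, and your lower bound $g^\eps_0\ge c\int g^\eps\,\d\mm$ on $\supp(\mu_0)$ is correct, although you should make explicit that $g^\eps$ itself has compact support (since $\rho_1=f^\eps_1 g^\eps$ with $f^\eps_1>0$ everywhere forces $\supp(g^\eps)=\supp(\mu_1)$); otherwise the infimum of $\hr^\eps_1$ over $K\times\X$ is zero and the bound would not follow. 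For (c) you differentiate the quotient $f^\eps=\rho_0/g^\eps_0$ and use Young's inequality, whereas the paper instead expands $|\nabla\rho_0|^2/\rho_0$ via the chain rule on the product $\rho_0=f^\eps g^\eps_0$ and bounds the cross term; both routes use the same two ingredients (finiteness of $I(\mu_0\,|\,\mm)$ and the lower bound on $g^\eps_0$ together with smoothness of $g^\eps_0$ on the compact support). Your version is arguably a little more direct; the paper's avoids introducing $\nabla\log g^\eps_0$ and works entirely at the level of $\nabla f^\eps,\nabla g^\eps_0$. One cosmetic slip: your final bound $2\|\nabla\log g^\eps_0\|_{L^\infty(\supp\mu_0)}^2\|\rho_0/g^\eps_0\|_{L^\infty}$ for the second integral should carry a factor $\mm(\supp\mu_0)$, or equivalently one should bound $\int\rho_0/g^\eps_0\,\d\mm\le\|1/g^\eps_0\|_{L^\infty(\supp\mu_0)}$ instead; either way finiteness is clear.
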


\begin{proof}
For (a) and (b-i) see \cite[Proposition 2.1]{GigTam18}. As regards (b-ii), the fact that $f^\eps \in L^\infty(\mm)$, $\hr_t \in C^\infty(\X \times \X)$ and \eqref{eq:semigroup} imply $f_t^\eps \in C^\infty(\X)$ for all $0 < t \leq 1$, while the maximum principle ensures that $f_t^\eps \in L^\infty(\mm)$; the statements for $g^\eps_t$ and  $\rho_t^\eps = f_t^\eps g_t^\eps$ follow by the same reason.

\noindent{\bf (b-iii)}. Notice that the first equation in the Schrödinger system \eqref{eq-h22} can be rewritten as
\[
f^\eps = \frac{\rho_0}{\hT^\eps_1 g^\eps}.
\]
Since $\hT^\eps$ is positivity improving, $\hT^\eps_1 g^\eps$ is smooth and $\rho_0 \in C^k(\X)$ with compact support, the conclusion follows. A similar argument holds for $g^\eps$.

\noindent{\bf (c)}. Observe that the Schrödinger system \eqref{eq-h22} and $g_0^\eps > 0$ (as $\hT^\eps$ is positivity improving) force $f^\eps$ to have the same support as $\rho_0$. Thus $f^\eps$ has compact support and, as a consequence, $g_0^\eps \geq c > 0$ in $\supp(f^\eps)$ for some $c$. This remark and the chain rule allow us to say that
\[
\begin{split}
c\frac{|\nabla f^\eps|^2}{f^\eps} & \leq g_0^\eps \frac{|\nabla f^\eps|^2}{f^\eps} \leq g_0^\eps \frac{|\nabla f^\eps|^2}{f^\eps} + f^\eps \frac{|\nabla g_0^\eps|^2}{g_0^\eps} = \frac{|\nabla\rho_0|^2}{\rho_0} - 2\langle\nabla f^\eps,\nabla g_0^\eps\rangle \\ & \leq \frac{|\nabla\rho_0|^2}{\rho_0} + |\nabla f^\eps|^2 + |\nabla g_0^\eps|^2,
\end{split}
\]
so that it remains to prove the integrability of the right-hand side. The first term is integrable by assumption, the third one by the regularization properties of $\hT^\eps$, while for the second one notice that $I(\mu_0\,|\,\mm) < \infty$ and $\rho_0 \in L^\infty(\mm)$ imply $|\nabla\rho_0| \in L^2(\mm)$; plugging this information into
\[
c|\nabla f^\eps| \leq g_0^\eps |\nabla f^\eps| \leq |\nabla\rho_0| + f^\eps|\nabla g_0^\eps| 
\]
we get the desired conclusion.
\end{proof}

For this reason we formulate the following

\begin{assumptions}[Hypotheses on $\mu_0,\mu_1$] \label{ass-01}
The endpoint marginals $\mu_0,\mu_1 \in \probt\X$ are such that $\mu_0,\mu_1$ have compact supports, $H(\mu_0\,|\,\mm), H(\mu_1\,|\,\mm), I(\mu_1\,|\,\mm) < \infty$ and their densities $\rho_0,\rho_1$ belong to $C^\infty(\X)$.
\end{assumptions}

\subsection*{A dynamical viewpoint}

As concerns the evolution of entropic interpolations and Schrödinger potentials, let us first notice that under Assumptions \ref{ass-01}, by the very definition \eqref{eq-h23b},  item (b-iii) of Proposition \ref{res-h02} and the fact that $\hr_t(x,y) \in C^\infty((0,\infty) \times \X^2)$ we deduce that $f_t^\eps$ and $g_t^\eps$ are smooth on $[0,1] \times \X$ and solve
\begin{equation}\label{eq:heat}
\partial_t f_t^\eps = \eps\hL f_t^\eps \qquad\qquad -\partial_t g_t^\eps = \eps\hL g_t^\eps
\end{equation}
in the classical sense. Moreover, if we look at $(f_t^\eps),(g_t^\eps)$ as curves parametrized by $t$ with values in $W^{1,2}(\X,\mm)$, they belong to the set $AC([0,1],W^{1,2}(\X,\mm))$ of all absolutely continuous functions from $[0,1]$ to $W^{1,2}(\X,\mm)$. As a consequence the PDEs above hold also when $\partial_t f_t^\eps,\partial_t g_t^\eps$ are seen as strong $W^{1,2}$-limits. 

Relying on that, it follows that the Schrödinger potentials $\varphi_t^\eps,\psi_t^\eps$ are smooth on $(0,1] \times \X$ and $[0,1) \times \X$ and solve forward and backward Hamilton-Jacobi-Bellman equations respectively, i.e.
\begin{equation}\label{eq:hjb}
\partial_t\varphi_t^\eps = \frac{1}{2}|\nabla\varphi_t^\eps|^2 + \eps\hL \varphi_t^\eps \qquad\qquad \partial_t\psi_t^\eps = \frac{1}{2}|\nabla\psi_t^\eps|^2 + \eps\hL \psi_t^\eps,
\end{equation}
while for $(\rho_t^\eps)$ the continuity equation
\begin{equation}\label{eq:continuity}
\partial\rho_t^\eps + {\rm div}_\mm(\nabla\vartheta_t^\eps\,\rho_t^\eps) = 0
\end{equation}
is satisfied in $(0,1) \times \X$, where $\vartheta_t^\eps := (\psi_t^\eps - \varphi_t^\eps)/2$ and ${\rm div}_\mm$ denotes the divergence with respect to $\mm$, i.e.\ the opposite of the adjoint of the differential in $W^{1,2}(\X,\mm)$. This last PDE is strongly linked to the dynamical representation of the entropic cost $\inf\eqref{eq-h06}$, namely
\begin{equation}\label{eq-h05}
\eps \inf\eqref{eq-h06} = \frac{\eps}{2}\Big(H(\mu_0\,|\,\mm) + H(\mu_1\,|\,\mm)\Big) + \int_{[0,1] \times \X}\frac{|\nabla\vartheta_t^\eps|^2}{2}\dt\d\mu_t^\eps + \frac{\eps^2}{8} \int_0^1 I(\mu_t^\eps\,|\,\mm)\,\dt,
\end{equation}
shown in \cite{CGP14,GLR15} for Setting  \ref{setting}-(a) and in \cite{GigTam18b} for rather general metric measure spaces including Setting  \ref{setting}-(b). By the very definition of $\vartheta_t^\eps$ and since $\eps\log\rho_t^\eps = \varphi_t^\eps + \psi_t^\eps$, this implies
\begin{equation}\label{eq:integrability}
\int_{[0,1] \times \X}\frac{|\nabla\varphi_t^\eps|^2}{2}\dt\d\mu_t^\eps, \int_{[0,1] \times \X}\frac{|\nabla\psi_t^\eps|^2}{2}\dt\d\mu_t^\eps < \infty.
\end{equation}

\begin{remark}\label{rem-02}
The regularity of Schrödinger potentials comes from the one of $f_t^\eps,g_t^\eps$, the fact that $f_t^\eps,g_t^\eps$ are everywhere positive and the logarithm is smooth on $(0,\infty)$. However, Schrödinger potentials are not integrable in general, as $f_t^\eps,g_t^\eps$ can be arbitrarily close to 0. Thus we cannot study their behaviour as curves with values into some $L^p(\mm)$ space. This also explains why the regularity of $\varphi_t^\eps$ (resp.\ $\psi_t^\eps$) does not extend up to $t=0$ (resp.\ $t=1$).
\end{remark}

\subsection*{A physical interpretation}

In the Euclidean framework of Setting  \ref{setting}-(a) it is possible to make a bridge between what is presented so far and Nelson's formalism \cite{Nelson67}, thus providing a physical motivation for some results stated above and a further perspective on some objects.

Following \cite{Nelson67} we define the forward and backward velocities of the Markov measure $\hP$ for any $x \in \R^n$ and $0 \leq t <1$, $0 < t \leq 1$ respectively by
\[
\vf^\hP_t(x) := \lim _{h \to 0^+} \hE_\hP \left( \frac{X_{t+h} - X_t}{h}\mid X_t=x \right) \qquad \vb^\hP_t(x) := \lim _{h\to 0^+} \hE_\hP \left( \frac{X_{t-h} - X_t}{h}\mid X_t = x\right)
\]
when these limits are meaningful. The current velocity is defined, for any $0<t<1$ and $x \in \R^n$, by
\[
\vc{\hP}_t(x) := \lim _{ h\to 0^+} \hE_\hP \left( \frac{X _{ t+h}-X _{ t-h}}{2h}\mid X_t=x\right)
\]
and the osmotic velocity by
\[
\vo{\hP}_t(x) := \lim _{ h\to 0^+} \hE_\hP \left( \frac{X _{ t+h}-2X_t+X _{ t-h}}{2h}\mid X_t=x\right).
\]
We immediately see that
\[
\left\{
\begin{array}{l}
\vf^P = \vc P+\vo P,\\
\vb^P = -\vc P+\vo P,
\end{array}\right.
\qquad \textrm{and} \qquad
\left\{
\begin{array}{l}
\vc P = (\vf^P-\vb^P)/2,\\
\vo P = (\vf^P+\vb^P)/2.
\end{array}\right.
\]
For $\hR^\eps$, it is easily seen that
\[
\vc{\hR^\eps}=0,
\qquad \vf ^{\hR^\eps}=\vb ^{\hR^\eps}=\vo{\hR^\eps}= -\frac{\eps}{2}\nabla V
\]
whereas for $\hP^\eps$
\[
\left\{\begin{array}{l}
\vf^{\hP^\eps}_t = \nabla\psi_t^\eps + \frac{\eps}{2}\nabla V, \\
\vb^{\hP^\eps}_t = \nabla\varphi_t^\eps - \frac{\eps}{2}\nabla V,
\end{array}\right.
\qquad \textrm{and} \qquad
\left\{\begin{array}{l}
\vc{\hP^\eps}_t = \nabla\vartheta_t^\eps - \frac{\eps}{2}\nabla V, \\
\vo{\hP^\eps}_t = \frac{\eps}{2}\nabla\log\rho_t^\eps.
\end{array}\right.
\]
This allows to rewrite the continuity equation \eqref{eq:continuity} as
\[
\partial_t\rho_t^\eps + {\rm div}(\vc{\hP^\eps} \rho_t^\eps) = 0
\]
where now ${\rm div}$ denotes the divergence with respect to $\mathcal{L}^n$ if in Setting  \ref{setting}-(a) or $\vol$ if in Setting  \ref{setting}-(b). This is perfectly coherent with \eqref{eq:continuity} since ${\rm div}_\mm(w) = {\rm div}(w) - \frac{\eps}{2}\nabla V \cdot w$ for any vector field $w$. Furthermore, \eqref{eq-h05} becomes
\[
\eps \inf\eqref{eq-h06} = \frac{\eps}{2}\Big(H(\mu_0\,|\,\mm) + H(\mu_1\,|\,\mm)\Big) + \frac{1}{2}\int_{[0,1] \times \X}\Big(|\vc{\hP^\eps}|^2 + |\vo{\hP^\eps}_t|^2\Big)\dt\d\mu_t^\eps.
\]

\section{Proof of Proposition \ref{res-h04}}
\label{sec-HWI-proof}

We need to state some preliminary lemmas before completing the proof of Proposition \ref{res-h04} at page \pageref{page-proof}. Throughout the whole section we shall assume to work within $(\X,\sfd,\mm)$ as in  Setting  \ref{setting}.

\subsection*{Auxiliary lemmas}

Let us start with an approximation result.

\begin{lemma}\label{res-h01}
Let $\mu \in \probt\X$ with $H(\mu\,|\,\mm) < \infty$. Then:
\begin{enumerate}[(a)]
\item there exists a sequence $(\mu_n) \subset \probt\X$ with $\mu_n = \rho_n\mm$ and $\rho_n \in C^\infty_c(\X)$ such that $W_2(\mu_n,\nu) \to W_2(\mu,\nu)$ for all $\nu \in \probt\X$ and $H(\mu_n\,|\,\mm) \to H(\mu\,|\,\mm)$ as $n \to \infty$;
\item if in addition $I(\mu\,|\,\mm) < \infty$, then there exists a sequence $(\mu'_n) \subset \probt\X$ with $\mu_n = \rho_n\mm$, $\rho_n \in C^\infty_c(\X)$ such that $W_2(\mu'_n,\nu) \to W_2(\mu,\nu)$ for all $\nu \in \probt\X$, $H(\mu'_n\,|\,\mm) \to H(\mu\,|\,\mm)$ and $I(\mu'_n\,|\,\mm) \to I(\mu\,|\,\mm)$ as $n \to \infty$.
\end{enumerate}
\end{lemma}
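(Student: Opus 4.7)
The approach I would take is a two-stage approximation: first smooth the density $\rho := \d\mu/\d\mm$ via the heat semigroup $(\hT_s)$ to gain smoothness and positivity, then multiply by a smooth cutoff to impose compact support, and close with a diagonal extraction.

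For the smoothing step, fix $s > 0$ and set $\rho^s := \hT_s \rho$. The regularizing properties of $\hT$ recalled before Proposition~\ref{res-h02} yield $\rho^s \in C^\infty(\X) \cap L^\infty(\mm)$ with $\rho^s > 0$, while $\mm$-reversibility gives $\int \rho^s \,\d\mm = 1$. As $s \downarrow 0$: $\rho^s \to \rho$ in $L^1(\mm)$, so $\rho^s\mm \to \mu$ narrowly; finiteness of second moments and the convergence $W_2(\rho^s\mm,\mu) \to 0$ follow from the Gaussian-type heat-kernel bounds available in Setting~\ref{setting}; Jensen's inequality gives $H(\rho^s\mm\,|\,\mm) \leq H(\mu\,|\,\mm)$, and the matching $\liminf$ comes from the lower semicontinuity of $H$ with respect to narrow convergence, so $H(\rho^s\mm\,|\,\mm) \to H(\mu\,|\,\mm)$. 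For the truncation step, pick $x_0 \in \X$ and, for $R > 0$, choose $\chi_R \in C^\infty_c(\X)$ with $0 \leq \chi_R \leq 1$, $\chi_R \equiv 1$ on the ball $B_{R-1}(x_0)$ and $\chi_R \equiv 0$ outside $B_R(x_0)$. Setting $\rho_{s,R} := \chi_R\rho^s / Z_{s,R}$ with $Z_{s,R} := \int \chi_R\rho^s\,\d\mm$, one gets $\rho_{s,R} \in C^\infty_c(\X)$, and for fixed $s$, dominated convergence as $R \uparrow \infty$ yields $Z_{s,R} \to 1$, narrow and second-moment convergence (hence $W_2$-convergence to any $\nu \in \probt\X$), and $H(\rho_{s,R}\mm\,|\,\mm) \to H(\rho^s\mm\,|\,\mm)$. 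A diagonal extraction $(s_n, R_n)$ with $s_n \downarrow 0$, $R_n \uparrow \infty$ produces the sequence required by (a).

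For (b), the sole additional task is $I(\mu'_n\,|\,\mm) \to I(\mu\,|\,\mm)$. The smoothing step is handled by the Bakry-Émery estimate \eqref{eq:bakry}, which, combined with $\mm$-reversibility, yields the semigroup contraction $I(\rho^s\mm\,|\,\mm) \leq e^{-2\kappa s}\, I(\mu\,|\,\mm)$; together with the $L^1(\mm)$-lower semicontinuity of $I$ recalled just after its definition, this gives $I(\rho^s\mm\,|\,\mm) \to I(\mu\,|\,\mm)$ as $s \downarrow 0$.

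The main obstacle — and the only step that requires genuine care — is the truncation for Fisher information, because a naive cutoff produces a term $\rho^s|\nabla\chi_R|^2/\chi_R$ in $|\nabla(\chi_R\rho^s)|^2/(\chi_R\rho^s)$ that blows up near $\partial\supp(\chi_R)$. The standard remedy is to take $\chi_R = \phi_R^2$ with a Lipschitz $\phi_R$ equal to $1$ on $B_{R-1}(x_0)$ and vanishing outside $B_R(x_0)$, so that $|\nabla\chi_R|^2/\chi_R = 4|\nabla\phi_R|^2$ is bounded uniformly in $R$; the extra term then integrates against $\mathbf{1}_{B_R\setminus B_{R-1}}\rho^s\mm$, whose mass vanishes as $R \to \infty$. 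This yields $\limsup_R I(\rho_{s,R}\mm\,|\,\mm) \leq I(\rho^s\mm\,|\,\mm)$; lower semicontinuity provides the matching $\liminf$, and a diagonal extraction concludes (b).
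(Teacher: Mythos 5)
Your two-stage scheme (heat-semigroup mollification followed by a squared cutoff, then diagonal extraction) is exactly the structure of the paper's proof, including the key observation that one should cut off by a \emph{squared} Lipschitz function $\chi=\phi^2$ so that $|\nabla\chi|^2/\chi$ stays bounded. The main difference is presentational: where you re-derive the mollification-step convergences by hand (Jensen for the entropy, the Bakry--\'Emery gradient commutation for Fisher information, heat-kernel bounds for $W_2$), the paper simply cites the gradient-flow theory of Ambrosio--Gigli--Savar\'e for all three limits at once. Both are legitimate; yours is more self-contained, theirs shorter. For the truncation step the paper instead picks cutoffs $\chi_n$ with Lipschitz constant $O(1/n)$ so that
\[
\frac{|\nabla(\chi_n^2\rho)|^2}{\chi_n^2\rho}\ \leq\ 2\frac{|\nabla\rho|^2}{\rho}+8C^2\rho
\]
gives an $n$-independent integrable majorant and one concludes by dominated convergence, whereas you keep $|\nabla\phi_R|$ uniformly bounded and rely on the vanishing $\rho^s\mm$-mass of the annulus $B_R\setminus B_{R-1}$ together with a $\limsup$/$\liminf$ (lower semicontinuity) sandwich.

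There is, however, a gap in your $\limsup$ estimate as written. Expanding $|\nabla(\phi_R^2\rho^s)|^2/(\phi_R^2\rho^s)$ exactly produces \emph{three} terms:
\[
\phi_R^2\frac{|\nabla\rho^s|^2}{\rho^s}\;+\;4\rho^s|\nabla\phi_R|^2\;+\;4\phi_R\langle\nabla\rho^s,\nabla\phi_R\rangle,
\]
and you only account for the first two. If instead you bound $|a+b|^2\leq 2|a|^2+2|b|^2$ to eliminate the cross term, the leading term becomes $2\phi_R^2|\nabla\rho^s|^2/\rho^s$ and your argument only yields $\limsup_R I(\rho_{s,R}\mm\,|\,\mm)\leq 2\,I(\rho^s\mm\,|\,\mm)$, which does not close. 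The fix is easy: either keep the exact expansion and note that the cross term is supported on $B_R\setminus B_{R-1}$ and dominated by $4C|\nabla\rho^s|\in L^1(\mm)$, hence vanishes; or use the asymmetric bound $|a+b|^2\leq(1+\eta)|a|^2+(1+\eta^{-1})|b|^2$ and send $\eta\downarrow 0$ after $R\uparrow\infty$; or simply adopt the paper's dominated-convergence route, which sidesteps the issue because the factor $2$ lands harmlessly inside a fixed majorant.
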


\begin{proof}
Let us write $\mu = \rho\,\mm$ and, as a first step, let us prove that both in (a) and (b) it is possible to find a sequence of measures with smooth densities converging to $\mu$ in the desired sense. This can be proved by defining for $\eps > 0$
\[
\mu_\eps := \rho_\eps\,\mm \qquad \textrm{with } \rho_\eps := \hT_\eps\rho,
\]
which clearly have smooth densities by the regularizing properties of $(\hT_\eps)$. The convergence of $W_2(\mu_\eps,\nu)$, $H(\mu_\eps\,|\,\mm)$ and $I(\mu_\eps\,|\,\mm)$ to $W_2(\mu,\nu)$, $H(\mu\,|\,\mm)$ and $I(\mu\,|\,\mm)$ respectively as $\eps \downarrow 0$ is now a well-known fact in the theory of gradient flows (see for instance Theorem 2.4.15 and Remark 2.4.16 in \cite{AmbrosioGigliSavare08} in conjunction with the fact that the squared slope of the entropy is the Fisher information, as proved in \cite{AmbrosioGigliSavare11}).

Thus, it is not restrictive to suppose that $\mu$ has smooth density. Under this new assumption, let us prove that we can find a sequence of measures with compact supports and smooth densities converging to $\mu$ in the desired sense. To this aim define
\[
\mu_n := \alpha_n\rho_n\, \mm \qquad \textrm{with } \rho_n := \nchi_n^2\rho,
\]
where $\alpha_n$ is the renormalization constant and $\nchi_n$ is a smooth cut-off function with support in $B_{n+1}(x)$, for some $x \in \X$, $\nchi_n(x) = 1$ and Lipschitz constant controlled by $C/n$, where $C > 1$ does not depend on $n$ (see e.g.\,\cite{AFLMR07} for a proof of the existence of such cut-off functions). By dominated convergence it is not difficult to see that $W_2(\mu,\mu_n) \to 0$ and thus $W_2(\mu_n,\nu) \to W_2(\mu,\nu)$ for all $\nu \in \probt\X$ as $n \to \infty$; for the same reason $H(\mu_n\,|\,\mm) \to H(\mu\,|\,\mm)$. If we also assume that $I(\mu\,|\,\mm) < \infty$, then
\[
\frac{|\nabla\rho_n|^2}{\rho_n} \leq 2\nchi_n^2\frac{|\nabla\rho|^2}{\rho} + 8\rho |\nabla\nchi_n|^2 \leq 2\frac{|\nabla\rho|^2}{\rho} + 8\frac{C^2}{n^2}\rho.
\]
Since the right-hand side is integrable and $\nchi_n \to 1$ as $n \to \infty$, by dominated convergence we get $I(\mu_n\,|\,\mm) \to I(\mu\,|\,\mm)$.

Combining the two steps and using a diagonal argument, the conclusion follows.
\end{proof}

The following conservation result was pointed out in \cite{Conforti17} and \cite{Tamanini17} in the case $\X$ is compact with different approaches; see also \cite{GLR2018}. Following \cite{Tamanini17}, we extend the statement to the present framework.

\begin{lemma}\label{res-h07}
Under Assumptions \ref{ass-01}, for any $\eps > 0$ the function
\[
(0,1) \ni t \mapsto	\int_\X |\nabla\vartheta_t^\eps|^2 \,\d\mu_t^\eps - \frac{\eps^2}{4} I(\mu_t^\eps\,|\,\mm) =: Q^\eps_t
\]
is real-valued and constant. Thus we shall denote it by $Q^\eps$.
\end{lemma}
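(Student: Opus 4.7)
The plan is to reduce $Q^\eps_t$ to a manifestly simple bilinear functional of $f_t^\eps$ and $g_t^\eps$, and then to differentiate it in time using the heat equations $\partial_t f_t^\eps = \eps\hL f_t^\eps$ and $\partial_t g_t^\eps = -\eps\hL g_t^\eps$, which hold classically under Assumptions \ref{ass-01} thanks to Proposition \ref{res-h02}.

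For the algebraic part I would start from the identities $2\nabla\vartheta_t^\eps = \nabla\psi_t^\eps - \nabla\varphi_t^\eps$ and $\eps\nabla\log\rho_t^\eps = \nabla\varphi_t^\eps + \nabla\psi_t^\eps$, combined with $I(\mu_t^\eps\,|\,\mm) = \int_\X |\nabla\log\rho_t^\eps|^2\,\d\mu_t^\eps$. A polarization identity then yields
\[
|\nabla\vartheta_t^\eps|^2 - \frac{\eps^2}{4}|\nabla\log\rho_t^\eps|^2 = -\langle\nabla\varphi_t^\eps,\nabla\psi_t^\eps\rangle.
\]
Substituting $\nabla\varphi_t^\eps = \eps\nabla f_t^\eps/f_t^\eps$ and $\nabla\psi_t^\eps = \eps\nabla g_t^\eps/g_t^\eps$ and using $\d\mu_t^\eps = f_t^\eps g_t^\eps\,\d\mm$ collapses the integrand to $-\eps^2\Gamma(f_t^\eps,g_t^\eps)$, giving the compact formula
\[
Q^\eps_t = -\eps^2\int_\X\Gamma(f_t^\eps,g_t^\eps)\,\d\mm.
\]
This is a real number for each $t\in(0,1)$: by Proposition \ref{res-h02}(b-ii) the functions $f_t^\eps,g_t^\eps$ are smooth and in $L^\infty(\mm)$, while the smoothing properties of $\hT^\eps$ place $\nabla f_t^\eps, \nabla g_t^\eps$ in $L^2(\mm)$, so Cauchy--Schwarz puts $\Gamma(f_t^\eps,g_t^\eps)$ in $L^1(\mm)$.

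I would then differentiate in time and plug in the two heat equations to get
\[
\frac{\d}{\d t}Q^\eps_t = -\eps^3\int_\X\bigl[\Gamma(\hL f_t^\eps,g_t^\eps) - \Gamma(f_t^\eps,\hL g_t^\eps)\bigr]\,\d\mm,
\]
and kill the bracket using the symmetry of $\hL$ on $L^2(\mm)$, in the form $\int_\X\Gamma(u,v)\,\d\mm = -2\int_\X u\,\hL v\,\d\mm = -2\int_\X(\hL u)\,v\,\d\mm$. Applied to each of the two terms, both integrals come out equal to $-2\int_\X\hL f_t^\eps\cdot\hL g_t^\eps\,\d\mm$, so their difference vanishes and $Q^\eps_t$ is constant on $(0,1)$.

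The step I expect to be delicate is the rigorous justification of the differentiation under the integral and of the two integrations by parts on the non-compact space $\X$: one needs control of $\hL f_t^\eps,\hL g_t^\eps$ in $L^2(\mm)$ and of $\nabla\hL f_t^\eps,\nabla\hL g_t^\eps$ paired against $\nabla g_t^\eps,\nabla f_t^\eps$, for each $t\in(0,1)$. Both should follow by iterating the smoothing effect of $\hT^\eps$ away from the endpoints, possibly combined with a cutoff/localization argument to handle behaviour at infinity; this is precisely the point where the argument extends beyond the compact cases treated in \cite{Conforti17} and \cite{Tamanini17}, and where I expect the bulk of the technical work to lie.
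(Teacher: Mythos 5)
Your argument is essentially the paper's proof: the same polarization identity reducing $Q_t^\eps$ to $-\eps^2\int_\X\langle\nabla f_t^\eps,\nabla g_t^\eps\rangle\,\d\mm$, followed by differentiation under the integral using the forward/backward heat equations and cancellation by the symmetry of $\hL$. The technical justification you flag at the end is handled in the paper without any cutoff argument, by invoking Proposition~\ref{res-h02}(b-iii) rather than (b-ii): $f^\eps,g^\eps$ are smooth with compact support, so the regularization of $\hT^\eps$ gives $|\nabla f_t^\eps|,|\nabla g_t^\eps|\in L^\infty(\mm)$ for all $t$, and since $\mm$ is a probability measure the dominated convergence and integration by parts are then immediate.
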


\begin{proof}
As a first step, for all $0<t<1$ by algebraic manipulation we have
\[
|\nabla\vartheta_t^\eps|^2\rho_t^\eps - \frac{\eps^2}{4}|\nabla\log\rho_t^\eps|^2\rho_t^\eps = -\langle\nabla\varphi_t^\eps,\nabla\psi_t^\eps\rangle \rho_t^\eps = -\eps^2 \langle\nabla f_t^\eps,\nabla g_t^\eps\rangle
\]
so that
\[
Q^\eps_t = -\eps^2\int_\X \langle\nabla f_t^\eps,\nabla g_t^\eps\rangle\,\d\mm.
\]
By Proposition \ref{res-h02} (b-iii) we know that $f^\eps,g^\eps$ are smooth with compact support, hence $|\nabla f^\eps|, |\nabla g^\eps| \in L^\infty(\mm)$ and by the regularization properties of $\hT^\eps$ the same holds for $|\nabla f_t^\eps|, |\nabla g_t^\eps|$: this implies that $Q_t^\eps \in \R$ for all $0<t<1$. As concerns the constancy of $Q_t^\eps$, it is sufficient to prove that the right-hand side above is constant in $t$. To this aim, $\Gamma(f_t^\eps,g_t^\eps)$ is smooth both in time and space, as so are $f_t^\eps,g_t^\eps$; moreover $|\nabla f_t^\eps|, |\nabla g_t^\eps| \in L^\infty(\mm)$ by what just said. Therefore, by dominated convergence and \eqref{eq:heat} we obtain
\[
\begin{split}
\ddt \int_\X \langle\nabla f_t^\eps,\nabla g_t^\eps\rangle\,\d\mm 
& = \int_\X \Big(\langle\nabla\partial_t f_t^\eps,\nabla g_t^\eps\rangle + \langle\nabla f_t^\eps,\nabla\partial_t g_t^\eps\rangle\Big)\,\d\mm \\
& = \eps\int_\X \Big(\langle\nabla\hL f_t^\eps,\nabla g_t^\eps\rangle - \langle\nabla f_t^\eps,\nabla\hL g_t^\eps\rangle\Big)\,\d\mm.
\end{split}
\]
From integration by parts formula it is straightforward to see that the right-hand side vanishes, whence the conclusion.
\end{proof}

Motivated by \eqref{eq-h05}, let us investigate separately the convergence of current and osmotic velocities as $\eps \downarrow 0$.

\begin{lemma}\label{res-h06}
Under Assumptions \ref{ass-01}, for any $\eps > 0$ we have
\begin{equation}\label{eq:toW2}
\lim_{\eps \downarrow 0} \int_{[0,1] \times \X}|\nabla\vartheta_t^\eps|^2\,\dt\d\mu_t^\eps = W_2^2(\mu_0,\mu_1), \quad \lim_{\eps \downarrow 0} \int_{[0,1] \times \X} t|\nabla\vartheta_t^\eps|^2\,\dt\d\mu_t^\eps = \frac{1}{2} W^2_2(\mu_0,\mu_1)
\end{equation}
and 
\begin{equation}\label{eq:to0}
\lim_{\eps \downarrow 0} \eps^2 \int_0^1 I(\mu_t^\eps\,|\,\mm)\,\d t = 0, \qquad\qquad \lim_{\eps \downarrow 0} \eps^2 \int_0^1 t I(\mu_t^\eps\,|\,\mm)\,\d t = 0.
\end{equation}
\end{lemma}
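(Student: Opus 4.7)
The plan is to combine the dynamical representation \eqref{eq-h05} of the entropic cost with the Mikami-type convergence \eqref{eq:converge}, to invoke Benamou--Brenier in order to pin down the limit of the kinetic term, and then to transfer the conclusion to the $t$-weighted integrals by means of the conservation law of Lemma \ref{res-h07}.

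\emph{Step 1: Benamou--Brenier lower bound.} Under Assumptions \ref{ass-01} the entropic interpolation $(\mu_t^\eps)$ solves the continuity equation \eqref{eq:continuity} with velocity field $\nabla\vartheta_t^\eps$ and fixed endpoints $\mu_0,\mu_1$; the global integrability \eqref{eq:integrability} makes it an admissible competitor in the Benamou--Brenier formula for $W_2^2$. Consequently
\[
\int_{[0,1] \times \X} |\nabla\vartheta_t^\eps|^2 \,\dt\,\d\mu_t^\eps \;\geq\; W_2^2(\mu_0,\mu_1), \qquad \forall\, \eps>0.
\]

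\emph{Step 2: unweighted identities.} By \eqref{eq-h05} and \eqref{eq:converge} the sum
\[
\frac{\eps}{2}\big(H(\mu_0\,|\,\mm) + H(\mu_1\,|\,\mm)\big) + \frac{1}{2}\int_{[0,1] \times \X}|\nabla\vartheta_t^\eps|^2\,\dt\,\d\mu_t^\eps + \frac{\eps^2}{8}\int_0^1 I(\mu_t^\eps\,|\,\mm)\,\dt
\]
tends to $\tfrac{1}{2}W_2^2(\mu_0,\mu_1)$ as $\eps\downarrow 0$. The first summand vanishes, since $H(\mu_0\,|\,\mm),H(\mu_1\,|\,\mm)$ are finite by Assumptions \ref{ass-01}; the other two are non-negative; and by Step 1 the middle one has $\liminf$ at least $\tfrac12 W_2^2(\mu_0,\mu_1)$. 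The elementary fact that $a_\eps+b_\eps\to L$ with $a_\eps,b_\eps\geq 0$ and $\liminf a_\eps \geq L$ forces $a_\eps\to L$ and $b_\eps\to 0$ then immediately yields the first identity in \eqref{eq:toW2} and the first in \eqref{eq:to0}.

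\emph{Step 3: $t$-weighted identities.} Lemma \ref{res-h07} ensures that $Q_t^\eps := \int_\X |\nabla\vartheta_t^\eps|^2\,\d\mu_t^\eps - \tfrac{\eps^2}{4}I(\mu_t^\eps\,|\,\mm)$ is a constant $Q^\eps$. Integrating this identity over $[0,1]$ and invoking Step 2 yields $Q^\eps \to W_2^2(\mu_0,\mu_1)$. Multiplying by $t$ and integrating gives
\[
\int_0^1 t\int_\X|\nabla\vartheta_t^\eps|^2\,\d\mu_t^\eps\,\dt \;-\; \frac{\eps^2}{4}\int_0^1 t\,I(\mu_t^\eps\,|\,\mm)\,\dt \;=\; \frac{Q^\eps}{2},
\]
and since $0 \leq \eps^2\int_0^1 t\,I(\mu_t^\eps\,|\,\mm)\,\dt \leq \eps^2\int_0^1 I(\mu_t^\eps\,|\,\mm)\,\dt \to 0$ by Step 2, the second identity in \eqref{eq:to0} is established and the second identity in \eqref{eq:toW2} follows at once.

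\emph{Expected main obstacle.} The only non-routine point is Step 1: one must identify $(\mu_t^\eps)$ as a $W_2$-absolutely continuous curve whose metric derivative is dominated by $\big(\int_\X|\nabla\vartheta_t^\eps|^2\,\d\mu_t^\eps\big)^{1/2}$. Given the smoothness of $\rho_t^\eps$ and $\vartheta_t^\eps$ on $(0,1)\times\X$ discussed in Section \ref{sec-entint}, together with the $L^2$ bound \eqref{eq:integrability}, this is classical, so the remainder of the argument is essentially algebraic.
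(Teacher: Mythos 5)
Your proof is correct and follows essentially the same route as the paper's: the Benamou--Brenier lower bound combined with \eqref{eq-h05} and \eqref{eq:converge} pins down the unweighted limits, and the conservation law of Lemma \ref{res-h07} transfers these to the $t$-weighted integrals. The only difference is presentational (you make the squeeze argument in Step 2 fully explicit and reorder Step 3 slightly), but the key ideas and their sequencing match the paper's proof.
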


\begin{proof}
Let us first notice that combining \eqref{eq-h05} and \eqref{eq:converge} we get
\[
\lim_{\eps \downarrow 0}\Big( \int_{[0,1] \times \X}|\nabla\vartheta_t^\eps|^2\,\d t\d\mu_t^\eps + \frac{\eps^2}{4} \int_0^1 I(\mu_t^\eps\,|\,\mm)\,\d t\Big) = W_2^2(\mu_0,\mu_1).
\]
Since the continuity equation \eqref{eq:continuity} is satisfied by $\mu_t^\eps$, the Benamou-Brenier formula holds for any $\eps>0$, that is
\[
\int_{[0,1] \times \X}|\nabla\vartheta_t^\eps|^2\,\d t\d\mu_t^\eps \geq W_2^2(\mu_0,\mu_1),
\]
and together with the above limit, this leads us to the first identities both in \eqref{eq:toW2} and \eqref{eq:to0}. From them we immediately deduce that
\begin{equation}\label{eq:Qconv}
\lim_{\eps \downarrow 0} Q^\eps = W_2^2(\mu_0,\mu_1)
\end{equation}
and from the first one in \eqref{eq:to0}
\[
0 \leq \lim_{\eps \downarrow 0} \eps^2 \int_0^1 t I(\mu_t^\eps\,|\,\mm)\,\d t \leq \lim_{\eps \downarrow 0} \eps^2 \int_0^1 I(\mu_t^\eps\,|\,\mm)\,\d t = 0,
\]
whence also the second identity in \eqref{eq:to0}. Finally observe that
\[
\begin{split}
\lim_{\eps \downarrow 0} \int_{[0,1] \times \X} t|\nabla\vartheta_t^\eps|^2\,\dt\d\mu_t^\eps & = \lim_{\eps \downarrow 0} \Big(\int_{[0,1] \times \X} t|\nabla\vartheta_t^\eps|^2\,\dt\d\mu_t^\eps - \int_0^1 t I(\mu_t^\eps\,|\,\mm)\,\d t \Big) \\ & = \lim_{\eps \downarrow 0} \int_0^1 t Q^\eps \dt = W_2^2(\mu_0,\mu_1)\int_0^1 t\,\dt = \frac{1}{2}W_2^2(\mu_0,\mu_1).
\end{split}
\]
\end{proof}

As already noticed at Remark \ref{rem-02}, although it is smooth on the open interval $(0,1)$,  the density  $ \rho ^{ \eps}_t$ might be arbitrarily close to zero. Consequently, the Schrödinger potentials might not be integrable enough, and the Fisher information $I( \mu ^{ \eps}_t\,|\, \mm)$ might behave badly around $t=0$ and $t=1$.  Next lemma provides related regularity and growth controls. Its proof  is strongly inspired by \cite{GigTam17}, \cite{GigTam18}, \cite{GigTam18b} and \cite{Leonard13}; we thus address the reader to these articles for more details.

\begin{lemma}\label{lem:derivatives}
Under Assumptions \ref{ass-01}, let $\delta > 0$ and set, for all $0 \leq t \leq 1$, $\rho_t^{\eps,\delta} := (f_t^\eps + \delta)(g_t^\eps + \delta)$ as well as
\[
\varphi_t^{\eps,\delta} := \eps\log(f_t^\eps + \delta) \qquad \psi_t^{\eps,\delta} := \eps\log(g_t^\eps + \delta) \qquad \vartheta_t^{\eps,\delta} := \frac{1}{2}\big(\psi_t^{\eps,\delta} - \varphi_t^{\eps,\delta}\big).
\]
Then all the functions so defined belong to $C^\infty([0,1] \times \X) \cap W^{1,2}(\X,\mm)$ and, as curves parametrized by $t$ with values in $W^{1,2}(\X,\mm)$, to $AC([0,1],W^{1,2}(\X,\mm))$. The time derivatives of $\varphi_t^{\eps,\delta},\psi_t^{\eps,\delta},\rho_t^{\eps,\delta}$ are given by
\begin{equation}\label{eq:pdedelta}
\begin{split}
\partial_t\varphi_t^{\eps,\delta} = \frac{1}{2}|\nabla\varphi_t^{\eps,\delta}|^2 + \eps\hL \varphi_t^{\eps,\delta} \qquad & \qquad \partial_t\psi_t^{\eps,\delta} = \frac{1}{2}|\nabla\psi_t^{\eps,\delta}|^2 + \eps\hL \psi_t^{\eps,\delta} \\
\partial\rho_t^{\eps,\delta} + {\rm div}_\mm & (\nabla\vartheta_t^{\eps,\delta}\,\rho_t^{\eps,\delta}) = 0
\end{split}
\end{equation}
where $\partial_t\varphi_t^{\eps,\delta}$, $\partial_t\psi_t^{\eps,\delta}$, $\partial_t\rho_t^{\eps,\delta}$ have to be understood both in the classical sense and as strong $W^{1,2}$-limits.

Furthermore, defining $u(z) := z\log z$, the function $t \mapsto \int_\X u(\rho_t^{\eps,\delta})\,\d\mm$ belongs to $C^2([0,1])$ and for every $t \in [0,1]$ it holds
\begin{subequations}
\begin{align}
\label{eq:firstder}
\frac{\d}{\d t}\int_\X u(\rho_t^{\eps,\delta})\,\d\mm & = \int_\X \langle\nabla\rho^{\eps,\delta}_t,\nabla\vartheta^{\eps,\delta}_t\rangle\,\d\mm \\
\label{eq:secondder}
\frac{\d^2}{\d t^2}\int_\X u(\rho_t^{\eps,\delta})\,\d\mm & = \int_\X \Big(\Gamma_2(\vartheta_t^{\eps,\delta}) + \frac{\eps^2}{4}\Gamma_2(\log\rho_t^{\eps,\delta})\Big)\rho_t^{\eps,\delta}\,\d\mm.
\end{align}
\end{subequations}
\end{lemma}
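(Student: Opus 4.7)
The plan is to establish the claim in three stages, with the formula \eqref{eq:secondder} for the second derivative being the main obstacle.

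\textbf{Step 1: regularity and verification of the PDEs.} Under Assumptions \ref{ass-01}, Proposition \ref{res-h02}\,(b-ii)/(b-iii) provides that $f_t^\eps,g_t^\eps$ are smooth on $[0,1]\times\X$ and uniformly bounded in $L^\infty(\mm)$. The point of introducing $\delta>0$ is precisely that $f_t^\eps+\delta,g_t^\eps+\delta\geq\delta$, so composition with $\log\in C^\infty((0,\infty))$ yields smoothness of $\varphi_t^{\eps,\delta},\psi_t^{\eps,\delta},\vartheta_t^{\eps,\delta},\rho_t^{\eps,\delta}$ on $[0,1]\times\X$. Membership in $W^{1,2}(\X,\mm)$ at each $t$ follows from the chain rule $\nabla\varphi_t^{\eps,\delta}=\eps\nabla f_t^\eps/(f_t^\eps+\delta)$ together with $f_t^\eps+\delta\geq\delta$ and the uniform $L^\infty$-control of $|\nabla f_t^\eps|$ provided by the Bakry--\'Emery estimate \eqref{eq:bakry} applied to the smooth compactly supported datum $f^\eps$; analogous bounds on $\partial_t$ and $\nabla\partial_t$ yield the absolute continuity in $W^{1,2}(\X,\mm)$. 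I would then verify \eqref{eq:pdedelta} by direct chain-rule computation: $\hL(\log u)=\hL u/u-|\nabla u|^2/(2u^2)$ applied to $u=f_t^\eps+\delta$, combined with \eqref{eq:heat} and multiplication by $\eps/(f_t^\eps+\delta)$, recovers the forward HJB equation for $\varphi^{\eps,\delta}$, and the backward one for $\psi^{\eps,\delta}$ is identical in spirit. For the continuity equation, the Leibniz rule, the heat equations, and the product identity $\mathrm{div}_\mm(\phi\nabla\eta)=\langle\nabla\phi,\nabla\eta\rangle+2\phi\hL\eta$ together identify $\partial_t\rho_t^{\eps,\delta}$ with $-\mathrm{div}_\mm(\rho_t^{\eps,\delta}\nabla\vartheta_t^{\eps,\delta})$.

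\textbf{Step 2: first derivative.} With $u'(z)=1+\log z$, differentiating under the integral and inserting the continuity equation yields, after one integration by parts,
\[
\frac{\d}{\d t}\int_\X u(\rho_t^{\eps,\delta})\,\d\mm=-\int_\X (1+\log\rho_t^{\eps,\delta})\,\mathrm{div}_\mm(\rho_t^{\eps,\delta}\nabla\vartheta_t^{\eps,\delta})\,\d\mm=\int_\X\langle\nabla\rho_t^{\eps,\delta},\nabla\vartheta_t^{\eps,\delta}\rangle\,\d\mm.
\]
The smoothness and uniform bounds from Step 1 make the exchange of $\partial_t$ and $\int$ rigorous and ensure that no boundary contributions arise.

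\textbf{Step 3: second derivative, the main obstacle.} I would rewrite the quantity above as $\int_\X\rho^{\eps,\delta}\langle\nabla\log\rho^{\eps,\delta},\nabla\vartheta^{\eps,\delta}\rangle\,\d\mm$ and exploit the symmetric decompositions $\eps\nabla\log\rho^{\eps,\delta}=\nabla\varphi^{\eps,\delta}+\nabla\psi^{\eps,\delta}$ and $2\nabla\vartheta^{\eps,\delta}=\nabla\psi^{\eps,\delta}-\nabla\varphi^{\eps,\delta}$ to expand the integrand. Differentiating once more in $t$ produces contributions from $\partial_t\rho^{\eps,\delta}$ (handled by another application of the continuity equation followed by integration by parts) and from $\partial_t\nabla\varphi^{\eps,\delta}, \partial_t\nabla\psi^{\eps,\delta}$ (substituted using gradients of \eqref{eq:pdedelta}). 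Repeated use of the Bochner-type identity $\Gamma_2(\phi)=\hL|\nabla\phi|^2-2\langle\nabla\phi,\nabla\hL\phi\rangle$ together with integration by parts against $\mm$ will recombine all cross terms into exactly $\int_\X\bigl[\Gamma_2(\vartheta^{\eps,\delta})+\tfrac{\eps^2}{4}\Gamma_2(\log\rho^{\eps,\delta})\bigr]\,\rho^{\eps,\delta}\,\d\mm$, mirroring the current/osmotic split already visible in \eqref{eq-h05}. The hard part is the bookkeeping: one must carefully separate Hessian- and $\hL$-type contributions so that the two $\Gamma_2$ quantities emerge cleanly. Throughout, the uniform smoothness and boundedness from Step 1 justify all interchanges and suppress boundary terms.
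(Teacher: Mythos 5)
Your proposal follows essentially the same route as the paper: establish regularity and the PDEs for $\varphi_t^{\eps,\delta},\psi_t^{\eps,\delta},\rho_t^{\eps,\delta}$ by chain rule (the $\delta$-shift keeping $\log$ smooth and bounded away from zero), justify differentiating under the integral by showing the relevant curves lie in $AC([0,1],W^{1,2}(\X,\mm))$, and then deduce the two derivative formulas from the HJB and continuity equations together with the decompositions $\eps\nabla\log\rho^{\eps,\delta}=\nabla\varphi^{\eps,\delta}+\nabla\psi^{\eps,\delta}$, $2\nabla\vartheta^{\eps,\delta}=\nabla\psi^{\eps,\delta}-\nabla\varphi^{\eps,\delta}$ and the Bochner identity. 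Where you sketch the Step 3 bookkeeping and flag it as the main obstacle, the paper simply defers that same computation to \cite{Leonard13}, invoking exactly the ingredients you list.
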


\begin{proof}
As already explained in Section \ref{sec-entint}, under Assumptions \ref{ass-01} $f_t^\eps, g_t^\eps \in C^\infty([0,1],\X) \cap W^{1,2}(\X,\mm)$ and $(f_t^\eps), (g_t^\eps) \in AC([0,1],W^{1,2}(\X,\mm))$. Therefore the regularity and integrability properties of $\varphi_t^{\eps,\delta},\psi_t^{\eps,\delta},\vartheta_t^{\eps,\delta},\rho_t^{\eps,\delta}$ are a straightforward consequence of the chain rule and of the fact that the logarithm is smooth with bounded derivatives on $[\delta,\infty)$. Also the PDEs solved by $\varphi_t^{\eps,\delta},\psi_t^{\eps,\delta},\rho_t^{\eps,\delta}$ are easily deduced, when interpreted in the classical sense, as they follow from \eqref{eq:hjb} and \eqref{eq:continuity}. In order to deduce the same identities with $\partial_t\varphi_t^{\eps,\delta}$, $\partial_t\psi_t^{\eps,\delta}$, $\partial_t\rho_t^{\eps,\delta}$ seen as strong $W^{1,2}$-limits, notice that by the maximum principle
\[
\eps\log\delta \leq \varphi_t^{\eps,\delta} \leq \eps\log(\|f^\eps\|_{L^\infty(\mm)} + \delta), \qquad \forall t \geq 0
\]
whence $(\varphi_t^{\eps,\delta}) \in L^\infty((0,\infty),L^\infty(\mm))$. Moreover, the smoothness of the logarithm, the chain and Leibniz rules entail that
\[
|\nabla\varphi_t^{\eps,\delta}| \leq \eps\frac{|\nabla f_t^\eps|}{\delta} \qquad |\Delta\varphi_t^{\eps,\delta}| \leq \eps\frac{|\Delta f_t^\eps}{\delta} + \eps\frac{|\nabla f_t^\eps|^2}{\delta^2}
\]
whence $(|\nabla\varphi_t^{\eps,\delta}|^2),(\Delta\varphi_t^{\eps,\delta}) \in L^\infty((0,1),W^{1,2}(\X,\mm))$; analogous estimates hold for $\nabla|\nabla\varphi_t^{\eps,\delta}|^2$ and $\nabla\Delta\varphi_t^{\eps,\delta}$. These bounds together with the fact that the PDEs in \eqref{eq:pdedelta} hold in the classical sense imply, by a dominated convergence argument, that \eqref{eq:pdedelta} are satisfied also as strong $W^{1,2}$-limits.

Relying on that, \eqref{eq:firstder} and \eqref{eq:secondder} follow by the computations carried out in \cite{Leonard13}. Indeed, the validity of \eqref{eq:pdedelta} as strong $W^{1,2}$-limits on $[0,1]$ ensures that $t \mapsto u(\rho_t^{\eps,\delta})$ and $t \mapsto \langle\nabla\rho_t^{\eps,\delta},\nabla\vartheta_t^{\eps,\delta}\rangle$ belong to $AC([0,1],L^2(\mm))$ and thus we can pass the time derivatives under the integral sign, i.e.
\begin{equation}\label{eq:underintegral}
\frac{\d}{\d t}\int_\X u(\rho_t^{\eps,\delta})\,\d\mm = \int_\X \partial_t u(\rho_t^{\eps,\delta})\,\d\mm, \qquad \frac{\d}{\d t}\int_\X \langle\nabla\rho_t^{\eps,\delta},\nabla\vartheta_t^{\eps,\delta}\rangle\,\d\mm = \int_\X \partial_t \langle\nabla\rho_t^{\eps,\delta},\nabla\vartheta_t^{\eps,\delta}\rangle\,\d\mm.
\end{equation}
Then the Hamilton-Jacobi-Bellman equations for the Schrödinger potentials and the continuity equation for the entropic interpolation together with $\eps\log\rho_t^\eps = \varphi_t^\eps + \psi_t^\eps$, here replaced by $\eps\log\rho_t^{\eps,\delta} = \varphi_t^{\eps,\delta} + \psi_t^{\eps,\delta}$, are the only tools needed to deduce \eqref{eq:firstder} and \eqref{eq:secondder}. Finally, the fact that:
\begin{enumerate}[-]
\item $(\rho_t^{\eps,\delta}),(\vartheta_t^{\eps,\delta}) \in AC([0,1],W^{1,2}(\X,\mm))$;
\item $(\rho_t^{\eps,\delta}) \in AC([0,1],W^{1,2}(\X,\mm))$, $(|\nabla\vartheta_t^{\eps,\delta}|^2),(\Delta\vartheta_t^{\eps,\delta}), (|\nabla\log\rho_t^{\eps,\delta}|^2),(\Delta\log\rho_t^{\eps,\delta})$ belong to $L^\infty((0,1),W^{1,2}(\X,\mm))$ and $\vartheta_t^{\eps,\delta},\log\rho_t^{\eps,\delta} \in C^\infty([0,1] \times \X)$;
\end{enumerate}
imply the continuity on $[0,1]$ of the right-hand sides of \eqref{eq:firstder} and \eqref{eq:secondder} respectively.
\end{proof}

With this results at disposal we can prove our main lemma: a rigorous `entropic' analogue of Otto-Villani's heuristic argument.

\begin{lemma}\label{res-h09}
Under Assumptions \ref{ass-01}, for any $\eps > 0$ it holds
\begin{equation}\label{eq-h11}
\begin{split}
H(\mu_1\,|\,\mm) - H(\mu_0\,|\,\mm)	\leq \int_\X \langle\nabla\vartheta_1^\eps,\nabla\rho_1\rangle\,\d\mm & - \kappa \int_{[0,1] \times \X} t|\nabla\vartheta_t^\eps|^2\, \d\mu_t^\eps\dt \\
& - \kappa\frac{\eps^2}{4} \int_{[0,1] \times \X} t|\nabla\log\rho_t^\eps|^2\, \d\mu_t^\eps\dt.
\end{split}
\end{equation}
\end{lemma}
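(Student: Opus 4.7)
The plan is to exploit the smooth regularization $\rho_t^{\eps,\delta}$ introduced in Lemma \ref{lem:derivatives}, for which first and second time derivatives of the entropy-like functional $A_\delta(t) := \int_\X u(\rho_t^{\eps,\delta})\,\d\mm$ are available in closed form. At the $\delta>0$ level I would combine an integration by parts in time on $[0,1]$ with the Bakry--\'Emery lower bound on $\Gamma_2$, and then send $\delta\downarrow 0$ to recover the inequality for the true entropic interpolation.

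More precisely, since Lemma \ref{lem:derivatives} ensures $A_\delta \in C^2([0,1])$, I would start from
\[
A_\delta(1) - A_\delta(0) = \int_0^1 A_\delta'(t)\,\dt = A_\delta'(1) - \int_0^1 t\,A_\delta''(t)\,\dt,
\]
where the weight $t$ is chosen so that the only surviving boundary term sits at $t=1$, in accordance with Assumption \ref{ass-01}, which imposes $I(\mu_1\,|\,\mm)<\infty$ but \emph{not} $I(\mu_0\,|\,\mm)<\infty$. Next, both \eqref{eq-cd(k,inf)} and \eqref{eq-cd(k,n)} imply the $\CD(\kappa,\infty)$ inequality $\Gamma_2(h) \geq \kappa |\nabla h|^2$ for smooth $h$, which plugged into \eqref{eq:secondder} with $h = \vartheta_t^{\eps,\delta}$ and $h = \log \rho_t^{\eps,\delta}$ yields
\[
A_\delta''(t) \geq \kappa \int_\X \Big(|\nabla\vartheta_t^{\eps,\delta}|^2 + \frac{\eps^2}{4}|\nabla\log\rho_t^{\eps,\delta}|^2\Big)\rho_t^{\eps,\delta}\,\d\mm.
\]
Since $t \geq 0$ on $[0,1]$, this pointwise bound survives weighting and integration regardless of the sign of $\kappa$, and inserted into the previous identity it produces the $\delta$-regularized version of \eqref{eq-h11}.

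The final step is to pass to the limit $\delta \downarrow 0$. The $L^\infty$-bounds in Proposition \ref{res-h02}(b) make $\rho_t^{\eps,\delta}$ uniformly bounded, so dominated convergence handles $A_\delta(0) \to H(\mu_0\,|\,\mm)$ and $A_\delta(1) \to H(\mu_1\,|\,\mm)$. For $A_\delta'(1)$ I would expand $\nabla\rho_1^{\eps,\delta}$ and $\nabla\vartheta_1^{\eps,\delta}$ via the Leibniz and chain rules and dominate using the computations of Proposition \ref{res-h02}(c) together with $I(\mu_1\,|\,\mm)<\infty$, getting the limit $\int_\X \langle\nabla\vartheta_1^\eps,\nabla\rho_1\rangle\,\d\mm$. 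For the curvature term, for every $t \in (0,1)$ the strict positivity of $f_t^\eps,g_t^\eps$ (since $\hT^\eps$ is positivity improving) gives pointwise convergence of the $\delta$-integrands, while global $(t,x)$-integrability of the limit is supplied by \eqref{eq:integrability} and the dynamic representation \eqref{eq-h05}.

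The delicate step is precisely this last convergence: when $\kappa<0$ a Fatou-type one-sided argument is not available, so one genuinely needs $\delta$-uniform integrable dominations for $|\nabla\vartheta_t^{\eps,\delta}|^2 \rho_t^{\eps,\delta}$ and $\eps^2 |\nabla\log\rho_t^{\eps,\delta}|^2 \rho_t^{\eps,\delta}$. The sensitive point is the control of ratios such as $|\nabla g_t^\eps|^2/(g_t^\eps+\delta)$ near the boundary of $\supp(\rho_1)$, where $g^\eps$ vanishes, plus symmetric considerations near $t=0$ arising from the compact support of $f^\eps$; this is where the extra integrability supplied by Proposition \ref{res-h02}(c) and \eqref{eq:integrability} is crucial.
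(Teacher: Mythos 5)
Your proposal follows the paper's proof essentially step for step: the same $\delta$-regularization from Lemma \ref{lem:derivatives}, the same Taylor-type identity $h(1)=h(0)+h'(1)-\int_0^1 t\,h''(t)\,\dt$ with the weight $t$ chosen precisely so that only the $t=1$ boundary term appears (matching the asymmetry $I(\mu_1\,|\,\mm)<\infty$ in Assumptions \ref{ass-01}), the same $\Gamma_2\geq\kappa\Gamma$ lower bound on the second derivative, and the same passage $\delta\downarrow 0$ using $L^\infty$ bounds for the entropy terms, the controls of Proposition \ref{res-h02}(c) for the $A_\delta'(1)$ term, and \eqref{eq:integrability} for the curvature term. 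You correctly flag that the $\delta\downarrow 0$ limit in the curvature term is the delicate step requiring $\delta$-uniform dominations rather than a one-sided Fatou argument when $\kappa<0$; the paper supplies these by rewriting $|\nabla\vartheta_t^{\eps,\delta}|^2+\tfrac{\eps^2}{4}|\nabla\log\rho_t^{\eps,\delta}|^2$ in terms of $|\nabla\varphi_t^{\eps,\delta}|^2$ and $|\nabla\psi_t^{\eps,\delta}|^2$ and bounding each piece of $|\nabla\varphi_t^{\eps,\delta}|^2\rho_t^{\eps,\delta}$ via the decomposition $\rho_t^{\eps,\delta}=\rho_t^\eps+\delta f_t^\eps+\delta g_t^\eps+\delta^2$, exactly the kind of termwise domination you anticipate.
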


\begin{proof}
The proof of the lemma is based on the standard calculus identity
\begin{equation}\label{eq-h12}
h(1) = h(0) + h'(1) -\int_0^1 t h''(t)\,\dt,
\end{equation}
valid for any $C^2$-regular function $h$, applied to $t \mapsto \int_\X u(\rho_t^{\eps,\delta})\,\d\mm$ defined as in Lemma \ref{lem:derivatives}. Plugging \eqref{eq:firstder} and \eqref{eq:secondder} into \eqref{eq-h12} and using the well-known inequalities
\[
\Gamma_2(\vartheta_t^{\eps,\delta}) \geq \kappa |\nabla\vartheta_t^{\eps,\delta}|^2 \qquad\qquad \Gamma_2(\log\rho_t^{\eps,\delta}) \geq \kappa |\nabla\log\rho_t^{\eps,\delta}|^2,
\]
consequence of the lower Ricci bounds \eqref{eq-cd(k,inf)} and \eqref{eq-cd(k,n)} rewritten in the form of the Bochner-Lichnerowicz-Weitzenböck formula, we obtain
\begin{equation}\label{eq:ineqwithdelta}
\begin{split}
\int_\X u(\rho_1^{\eps,\delta})\,\d\mm - \int_\X u(\rho_0^{\eps,\delta})\,\d\mm	\leq \int_\X \langle\nabla\vartheta_1^{\eps,\delta},\nabla\rho_1^{\eps,\delta}\rangle\,\d\mm & - \kappa \int_{[0,1] \times \X} t|\nabla\vartheta_t^{\eps,\delta}|^2\, \d\mu_t^{\eps,\delta}\dt \\
& - \kappa\frac{\eps^2}{4} \int_{[0,1] \times \X} t|\nabla\log\rho_t^{\eps,\delta}|^2\, \d\mu_t^{\eps,\delta}\dt,
\end{split}
\end{equation}
where $\mu_t^{\eps,\delta} := \rho_t^{\eps,\delta}\mm$. It is now sufficient to pass to the limit as $\delta \downarrow 0$. By dominated convergence (recall that $f^\eps,g^\eps \in L^\infty(\mm)$) it is easy to see that the left-hand side converges to $H(\mu_1\,|\,\mm) - H(\mu_0\,|\,\mm)$. In order to apply the dominated convergence theorem also to the first term on the right-hand side, notice that
\[
\nabla\rho_1^{\eps,\delta} = \nabla\rho_1 + \delta\nabla f^\eps_1 + \delta\nabla g^\eps \qquad\textrm{and}\qquad \nabla\vartheta_1^{\eps,\delta} = \frac{1}{2}\nabla\psi_1^{\eps,\delta} - \frac{1}{2}\nabla\varphi_1^{\eps,\delta}.
\]
Thus, to control $\langle\nabla\vartheta_1^{\eps,\delta},\nabla\rho_1^{\eps,\delta}\rangle$ let us first observe that
\[
|\langle\nabla\psi_1^{\eps,\delta},\nabla\rho_1\rangle | = \eps\frac{|\nabla g^\eps||\nabla\rho_1|}{g^\eps + \delta} \leq \eps\frac{|\nabla g^\eps||\nabla\rho_1|}{g^\eps} \leq \eps f_1^\eps\frac{|\nabla g^\eps|^2}{g^\eps} + \eps|\nabla g^\eps||\nabla f_1^\eps|
\]
and remark that the right-hand side is integrable by Proposition \ref{res-h02}-(c); secondly
\[
|\langle\nabla\varphi_1^{\eps,\delta},\nabla\rho_1\rangle | \leq \eps\frac{|\nabla f_1^\eps||\nabla\rho_1|}{f_1^\eps} \leq \eps g^\eps\frac{|\nabla f_1^\eps|^2}{f_1^\eps} + \eps|\nabla g^\eps||\nabla f_1^\eps|
\]
and in this case the right-hand side is integrable as $g^\eps$ has compact support and $f_1^\eps$ is bounded away from 0 therein; finally,
\[
\delta |\langle\nabla\psi_1^{\eps,\delta},\nabla f_1^\eps\rangle | = \eps\delta\frac{|\nabla g^\eps||\nabla f_1^\eps|}{g^\eps + \delta} \leq \eps |\nabla g^\eps||\nabla f_1^\eps|
\]
as $g^\eps + \delta \geq \delta$ and the same strategy applies to all the remaining terms that we obtain developing $\langle\nabla\vartheta_1^{\eps,\delta},\nabla\rho_1^{\eps,\delta}\rangle$. Therefore, the first term on the right-hand side of \eqref{eq:ineqwithdelta} converges to the first term on the right-hand side of \eqref{eq-h11}. As regards the other two summands, by the very definition of $\vartheta_t^{\eps,\delta}$ and since
\[
\eps\log\rho_t^{\eps,\delta} = \varphi_t^{\eps,\delta} + \psi_t^{\eps,\delta},
\]
the conclusion will follow if we are able to prove that
\begin{subequations}
\begin{align}
\label{eq:varphi}
& \lim_{\delta \downarrow 0} \int_{[0,1] \times \X} |\nabla\varphi_t^{\eps,\delta}|^2\, \d\mu_t^{\eps,\delta}\dt = \int_{[0,1] \times \X} |\nabla\varphi_t^\eps|^2\, \d\mu_t^\eps\dt, \\
\label{eq:psi}
& \lim_{\delta \downarrow 0} \int_{[0,1] \times \X} |\nabla\psi_t^{\eps,\delta}|^2\, \d\mu_t^{\eps,\delta}\dt = \int_{[0,1] \times \X} |\nabla\psi_t^\eps|^2\, \d\mu_t^\eps\dt.
\end{align}
\end{subequations}
To this aim, notice that $\rho_t^{\eps,\delta} = \rho_t^\eps + \delta f_t^\eps + \delta g_t^\eps + \delta^2$, whence using either $f_t^\eps + \delta \geq f_t^\eps$ or $f_t^\eps + \delta \geq \delta$ it is easy to infer that
\[
\begin{split}
|\nabla\varphi_t^{\eps,\delta}|^2\rho_t^\eps & = \eps^2\frac{|\nabla f_t^\eps|^2}{(f_t^\eps + \delta)^2}\rho_t^\eps \leq \eps^2\frac{|\nabla f_t^\eps|^2}{(f_t^\eps)^2}\rho_t^\eps = |\nabla\varphi_t^\eps|^2\rho_t^\eps, \\
\delta|\nabla\varphi_t^{\eps,\delta}|^2 f_t^\eps & = \delta\eps^2\frac{|\nabla f_t^\eps|^2}{(f_t^\eps + \delta)^2} f_t^\eps \leq \eps^2 |\nabla f_t^\eps|^2, \\
\delta|\nabla\varphi_t^{\eps,\delta}|^2 g_t^\eps & = \delta\eps^2\frac{|\nabla f_t^\eps|^2}{(f_t^\eps + \delta)^2} g_t^\eps \leq \eps^2\frac{|\nabla f_t^\eps|^2}{f_t^\eps} g_t^\eps = |\nabla\varphi_t^\eps|^2\rho_t^\eps, \\
\delta^2|\nabla\varphi_t^\eps|^2 & = \delta^2\eps^2\frac{|\nabla f_t^\eps|^2}{(f_t^\eps + \delta)^2} \leq \eps^2|\nabla f_t^\eps|^2.
\end{split}
\]
All the right-hand sides above are integrable on $[0,1] \times \X$ (either by \eqref{eq:integrability} or by the Bakry-\'Emery contraction estimate \eqref{eq:bakry} together with $f^\eps \in C^\infty_c(\X)$), thus by dominated convergence \eqref{eq:varphi} follows. An analogous argument holds for \eqref{eq:psi}, whence the conclusion.
\end{proof}

\subsection*{Completion of the proof of Proposition \ref{res-h04}}

We are now ready to complete the proof of the HWI* inequality.

\begin{proof}[Proof of Proposition \ref{res-h04}]\label{page-proof}
First of all, by Proposition \ref{res-h02} and Lemma \ref{res-h01}, it is sufficient to prove the result for any $\mu_0,\mu_1$ satisfying Assumptions \ref{ass-01}. This allows us to invoke Lemma \ref{res-h09}. Secondly, observe that by the Cauchy-Schwarz inequality
\[
\int_\X \langle\nabla\vartheta_1^\eps,\nabla\rho_1\rangle\,\d\mm = \int_\X \langle\nabla\vartheta_1^\eps,\nabla\log\rho_1\rangle\d\mu_1 \leq \int_\X |\nabla\vartheta_1^\eps|^2\d\mu_1 \sqrt{I(\mu_1\,|\,\mm)},
\]
so that if we plug this information into \eqref{eq-h11} we obtain
\[
\begin{split}
H(\mu_1\,|\,\mm) - H(\mu_0\,|\,\mm)	\leq \int_\X |\nabla\vartheta_1^\eps|^2\d\mu_1 \sqrt{I(\mu_1\,|\,\mm)} & - \kappa \int_{[0,1] \times \X} t|\nabla\vartheta_t^\eps|^2\, \d\mu_t^\eps\dt \\
& - \kappa\frac{\eps^2}{4} \int_{[0,1] \times \X} t|\nabla\log\rho_t^\eps|^2\, \d\mu_t^\eps\dt.
\end{split}
\]
Let us now pass to the limit as $\eps \downarrow 0$: by Lemma \ref{res-h07}, at $t=1$ we have
\[
Q^\eps = \int_\X |\nabla\vartheta_1^\eps|^2 \,\d\mu_1 - \frac{\eps^2}{4} I(\mu_1\,|\,\mm),
\]
so that \eqref{eq:Qconv} together with $I(\mu_1\,|\,\mm) < \infty$ yields
\[
\lim_{\eps \downarrow 0}\int_\X |\nabla\vartheta_1^\eps|^2 \d\mu_1 = W_2^2(\mu_0,\mu_1).
\]
By Lemma \ref{res-h06} we can pass to the limit as $\eps \downarrow 0$ also in the remaining terms on the right-hand side, thus concluding.
\end{proof}

Let us mention that another approach to prove the HWI inequality via the Schr\"odinger problem is pointed out in a recent work \cite{CR2018} where an entropic counterpart of the HWI inequality is formally obtained by differentiating the convexity estimate of the entropy along the entropic interpolations introduced in \cite[Thm.\,1.4]{Conforti17}.

\section{Final remarks and comments}
\label{sec-remarks}

\subsection*{Lagrangian and Hamiltonian interpretation}

From a heuristic point of view, the expression of the constant quantity $Q^\eps$ can be deduced by standard arguments in Lagrangian and Hamiltonian formalism. Indeed, motivated by \eqref{eq-h05} let us consider the action functional
\begin{equation}\label{eq:action}
\mathscr{A}(\nu,v) = \int_{[0,1] \times \X} \Big(\frac{|v_t|^2}{2} + \frac{\eps^2}{8}|\nabla\log\nu_t|^2\Big)\nu_t \,\dt\d\mm
\end{equation}
associated to the Lagrangian
\[
\mathscr{L}(\nu,v) = \int_\X \Big(\frac{|v|^2}{2}\nu + \frac{\eps^2}{8}\frac{|\nabla\nu|^2}{\nu}\Big) \,\d\mm.
\]
By means of Legendre's transform, the corresponding Hamiltonian is given by
\[
\mathscr{H}(\nu,p) = \int_\X \Big(\frac{|p|^2}{2\nu} - \frac{\eps^2}{8}\frac{|\nabla\nu|^2}{\nu}\Big) \,\d\mm
\]
and, at least formally, $\mathscr{H}$ is constant along the critical points of $\mathscr{A}$. Since $\mu_0$ and $\mu_1$ are prescribed, the Euler-Lagrange equation for \eqref{eq:action} reads as
\[
\begin{split}
& \partial_t\nu_t + {\rm div}_\mm(\nu_t v_t) = 0 \\
& \partial_t v_t + \frac{|v_t|^2}{2} = -\frac{\eps^2}{8}\big(2\Delta\log\nu_t + |\nabla\log\nu_t|^2\big)
\end{split}
\]
and, as it is not difficult to see (e.g.\ following the computations carried out in \cite{GigTam18} which fit to  Setting  \ref{setting}), these PDEs are satisfied along $(\rho_t^\eps,\vartheta_t^\eps)$. Finally, as in the Hamiltonian $p$ represents a momentum density, it is natural to set $p_t := \nu_t v_t$. From these considerations, the guess on the existence of the conserved quantity of Lemma \ref{res-h07} and on its expression follows. \\
This point of view is particularly investigated in the recent paper \cite{GLR2018}, to which we refer for more details.

\subsection*{The compact case}

As already mentioned in Remark \ref{rem-02}, in general Schrödinger potentials fail to be smooth in $t=0$ or $t=1$ and are not even integrable for any $0 < t < 1$, because $f_t^\eps,g_t^\eps$ can be arbitrarily close to 0. This problem can be overcome if $f^\eps,g^\eps \geq c > 0$ for some constant $c$: then $f_t^\eps,g_t^\eps \geq c$ as well by the maximum principle and $\varphi_t^\eps,\psi_t^\eps \geq \eps\log c$ from their very definition. However, even if we assume $\mu_0,\mu_1$ to have densities bounded away from 0 (thus forcing $\mm$ to belong to $\probt\X$, a condition which does not follow from our Setting  \ref{setting}, unless $\kappa > 0$, cf.\ Remark \ref{rem-01}-(a)), it is not known yet whether this implies an analogous lower bound on $f^\eps$ and $g^\eps$. This is the motivation behind the definitions provided in Lemma \ref{lem:derivatives}.

On the contrary, if $\X$ is assumed to be compact, then it is not restrictive to assume $\mu_0,\mu_1 \geq c\mm$ for some $c>0$: indeed any $\mu \in \prob\X$ can be approximated by
\[
\mu_n := \alpha_n\rho_n\mm \qquad \textrm{with } \rho_n := \max\{\rho,1/n\}
\]
(where $\alpha_n$ is the renormalization constant) in $W_2$, entropy and Fisher information in the sense of Lemma \ref{res-h01}. Secondly, $\hr_\eps$ is bounded from above since $\hr_\eps \in C^\infty(\X)$. These two facts imply that $f^\eps,g^\eps \geq c' > 0$, as shown in \cite{GigTam17}. As a consequence the proof of the HWI inequality is easier and the parallelism with the heuristic proof of Otto-Villani \cite{OttoVillani00} is stronger, since no `$\delta$-argument' in Lemma \ref{lem:derivatives} and Lemma \ref{res-h09} is needed; in particular, Lemma \ref{lem:derivatives} is already known to hold by \cite{GigTam17} and thus no proof is required.

\subsection*{\texorpdfstring{$\RCD$}{RCD} spaces}

All the results presented in this paper, with particular mention of the key Lemma \ref{res-h09} and Proposition \ref{res-h04}, are also true in a more general framework than the one of Setting  \ref{setting}-(b), namely in $\RCD^*(K,N)$ spaces (introduced in \cite{Gigli12}). If $\X$ is assumed to be compact, then this has been shown by the last author in his PhD thesis \cite{Tamanini17}. When $\X$ is not compact, the most important steps in our entropic approach still hold. Namely:
\begin{enumerate}[-]
\item all the regularity and integrability results concerning Schrödinger potentials and entropic interpolations mentioned in Section \ref{sec-entint} as well as the dynamic representation of the entropic cost;
\item the regularizing and contraction properties of $(\hT_t)$;
\item the existence of `good' cut-off functions;
\item the Benamou-Brenier formula and the Bochner-Lichnerowicz-Weitzenböck inequality.
\end{enumerate}
 The reader is addressed to \cite{GigTam18}, \cite{GigTam18b} for the first point and to \cite{Gigli14} for all the others.

\bigskip

\noindent{\bf Acknowledgements.}  This research was supported  by the French ANR-17-CE40-0030 EFI project and the LABEX MILYON
ANR-10-LABX-0070.


\begin{thebibliography}{10}

\bibitem{AmbrosioGigliSavare08}
{\sc L.~Ambrosio, N.~Gigli, and G.~Savar{\'e}}, {\em Gradient flows in metric
  spaces and in the space of probability measures}, Lectures in Mathematics ETH
  Z\"urich, Birkh\"auser Verlag, Basel, second~ed., 2008.

\bibitem{AmbrosioGigliSavare11}
\leavevmode\vrule height 2pt depth -1.6pt width 23pt, {\em Calculus and heat
  flow in metric measure spaces and applications to spaces with {R}icci bounds
  from below}, Invent. Math., 195 (2014), pp.~289--391.

\bibitem{AFLMR07}
{\sc D.~Azagra, J.~Ferrera, F.~L{\'o}pez-Mesas, and Y.~Rangel}, {\em Smooth
  approximation of {L}ipschitz functions on {R}iemannian manifolds}, J. Math.
  Anal. Appl., 326 (2007), pp.~1370--1378.

\bibitem{BakryGentilLedoux14}
{\sc D.~Bakry, I.~Gentil, and M.~Ledoux}, {\em Analysis and geometry of
  {M}arkov diffusion operators}, vol.~348 of Grundlehren der Mathematischen
  Wissenschaften [Fundamental Principles of Mathematical Sciences], Springer,
  Cham, 2014.

\bibitem{CGP14}
{\sc Y.~Chen, T.~Georgiou, and M.~Pavon}, {\em On the relation between optimal
  transport and {S}chr{\"o}dinger bridges: {A} stochastic control viewpoint},
  Journal of Optimization Theory and Applications, 169 (2016), pp.~671--691.

\bibitem{Conforti17}
{\sc G.~{Conforti}}, {\em A second order equation for {S}chr{\"o}dinger bridges
  with applications to the hot gas experiment and entropic transportation
  cost}.
\newblock Preprint, arXiv:1704.04821, (2017).

\bibitem{CR2018}
{\sc G.~{Conforti} and L.~{Ripani}}, {\em Around the entropic {T}alagrand
  inequality}.
\newblock Work in progress.

\bibitem{GLR2018}
{\sc I.~Gentil, C.~L{\'e}onard, and L.~Ripani}, {\em Dynamical aspects of
  generalized {S}chr{\"o}dinger problem via {O}tto calculus - {A} heuristic
  point of view}.
\newblock Preprint, arXiv:1806.01553 (2018).

\bibitem{GLR15}
\leavevmode\vrule height 2pt depth -1.6pt width 23pt, {\em About the analogy
  between optimal transport and minimal entropy}, Ann. Fac. Toulouse, 26
  (2017), pp.~569--600.

\bibitem{Gigli14}
{\sc N.~Gigli}, {\em Nonsmooth differential geometry - an approach tailored for
  spaces with {R}icci curvature bounded from below}.
\newblock Accepted at Mem. Amer. Math. Soc., arXiv:1407.0809, 2014.

\bibitem{Gigli12}
\leavevmode\vrule height 2pt depth -1.6pt width 23pt, {\em On the differential
  structure of metric measure spaces and applications}, Mem. Amer. Math. Soc.,
  236 (2015), pp.~vi+91.

\bibitem{GigTam17}
{\sc N.~Gigli and L.~Tamanini}, {\em Second order differentiation formula on
  compact ${RCD}^*({K},{N})$ spaces}.
\newblock Preprint, arXiv:1701.03932, (2017).

\bibitem{GigTam18}
{\sc N.~{Gigli} and L.~{Tamanini}}, {\em Second order differentiation formula
  on ${RCD}^*({K},{N})$ spaces}.
\newblock Preprint, arXiv:1802.02463, (2018).

\bibitem{GigTam18b}
{\sc N.~Gigli and L.~Tamanini}, {\em Benamou-{B}renier and duality formulas for
  the entropic cost on ${RCD}^*({K},{N})$ spaces}, Preprint, arXiv:1805.06325,
  (2018).

\bibitem{Grigoryan09}
{\sc A.~Grigoryan}, {\em Heat kernel and analysis on manifolds}, vol.~47,
  American Mathematical Soc., 2009.

\bibitem{Leonard12}
{\sc C.~L{\'e}onard}, {\em From the {S}chr\"odinger problem to the
  {M}onge-{K}antorovich problem}, J. Funct. Anal., 262 (2012), pp.~1879--1920.

\bibitem{Leonard14b}
\leavevmode\vrule height 2pt depth -1.6pt width 23pt, {\em Some properties of
  path measures}, S{\'e}minaire de Probabilit{\'e}s XLVI,  (2014),
  pp.~207--230.

\bibitem{Leonard14}
\leavevmode\vrule height 2pt depth -1.6pt width 23pt, {\em A survey of the
  {S}chr\"odinger problem and some of its connections with optimal transport},
  Discrete Contin. Dyn. Syst., 34 (2014), pp.~1533--1574.

\bibitem{Leonard13}
{\sc C.~L{\'e}onard}, {\em On the convexity of the entropy along entropic
  interpolations}, in Measure Theory in Non-Smooth Spaces, N.~Gigli, ed.,
  Partial Differential Equations and Measure Theory, De Gruyter Open, 2017.
\newblock 

\bibitem{Mikami04}
{\sc T.~Mikami}, {\em Monge's problem with a quadratic cost by the zero-noise
  limit of {$h$}-path processes}, Probab. Theory Related Fields, 129 (2004),
  pp.~245--260.

\bibitem{Nelson67}
{\sc E.~Nelson}, {\em Dynamical theories of {B}rownian motion}, Princeton
  University Press, Princeton, N.J., 1967.

\bibitem{Otto01}
{\sc F.~Otto}, {\em The geometry of dissipative evolution equations: the porous
  medium equation}, Comm. Partial Differential Equations, 26 (2001),
  pp.~101--174.

\bibitem{OttoVillani00}
{\sc F.~Otto and C.~Villani}, {\em Generalization of an inequality by
  {T}alagrand and links with the logarithmic {S}obolev inequality}, J. Funct.
  Anal., 173 (2000), pp.~361--400.

\bibitem{Tamanini17}
{\sc L.~Tamanini}, {\em Analysis and geometry of {RCD} spaces via the
  {S}chr{\"o}dinger problem}, PhD thesis, {U}niversit{\'e} {P}aris {N}anterre
  and SISSA, 2017.

\bibitem{Villani09}
{\sc C.~Villani}, {\em Optimal transport. Old and new}, vol.~338 of Grundlehren
  der Mathematischen Wissenschaften, Springer-Verlag, Berlin, 2009.

\end{thebibliography}

\end{document}